\def\Hb{H^\bot}
\def\Ht{H^\top}
\def\<{\langle}
\def\>{\rangle}
\def\eps{\epsilon}
\def\RR{\mathbb{R}}
\def\tr{\operatorname{Tr\,}}
\def\Div{\operatorname{div}}
\def\Ric{\operatorname{Ric}}
\def\vol{\operatorname{vol}}
\def\eq{\hspace*{-.5mm}&=&\hspace*{-.5mm}}
\def\minus{\hspace*{-.5mm}&-&\hspace*{-.5mm}}
\newtheorem{theorem}{Theorem}
\newtheorem{corollary}{Corollary}
\newtheorem{lemma}{Lemma}
\newtheorem{definition}{Definition}
\newtheorem{remark}{Remark}
\title{Integral formulas for a metric-affine manifold \\ with two complementary orthogonal distributions}
\author{
       Vladimir Rovenski\footnote{Mathematical Dept., University of Haifa, Mount Carmel, Haifa, 31905, Israel.
       \newline
       e-mail: {\tt vrovenski@univ.haifa.ac.il}} }
\begin{document}

\date{}

\maketitle

\begin{abstract}
We obtain integral formulas for a metric-affine space equipped with two complementary ortho\-gonal distributions.
The integrand depends on the Ricci and mixed scalar curvatures and invariants of the second fundamental forms and integrability tensors
of the distributions. The formulas under some conditions yield splitting of manifolds (including submersions and twisted products)
and provide geometrical obstructions for existence of distributions and foliations (or compact leaves of them).
\end{abstract}

\noindent
\textbf{Keywords}: metric; connection; distribution; submersion; twisted product; mixed scalar curvature; mean curvature; umbilical; harmonic; statistical; divergence; splitting.

\vskip1mm\noindent
\textbf{MSC (2010)} {\small Primary 53C12; Secondary 53C44.}

%\tableofcontents

\section*{Introduction}
%\label{sec:intro}

Integral formulas provide obstructions for existence of distributions and foliations (or compact leaves of them)
with given geometric properties and have applications in different areas of geometry and analysis, see survey in \cite{r25,rw-m}.
Distributions on manifolds appear in various situations, e.g. as fields of tangent planes of foliations
or kernels of differential forms \cite{bf}.
The~first known integral formula by G.\,Reeb \cite{Ree} for a codimension-1 foliated closed Riemannian manifold $(M,g)$ tells~us~that the~total mean curvature $H$ of the leaves is zero (thus, either $H\equiv0$ or $H(x)H(y)<0$ for some points $x,y\in M$). The proof is based on the divergence theorem and the identity $\Div N = (\dim M)\,h_{sc}$,
where $N$ is a unit normal to the leaves and $h_{sc}$ their scalar second fundamental form.
The formula poses a generalization (which is a consequence of Green's theorem applied to $N$)
to the case of second order mean curvature $\sigma_2$:
\begin{equation}\label{E-sigma2}
 \int_M (2\,\sigma_2-\Ric_{N,N})\,{\rm d}\vol_g=0.
\end{equation}
Moreover, \eqref{E-sigma2} admits a leaf-wise counterpart for a closed leaf $M'$ with induced metric $g'$.
Both formulas have many applications: e.g., \eqref{E-sigma2} implies nonexis\-tence
of umbilical foliations on a closed manifold of negative curvature.
Later on \cite{aw2010} extended \eqref{E-sigma2} to infinite series of formulas with higher order mean curvatures $\sigma_k\ (k\ge2)$.
 In~further generalizations for complementary orthogonal distributions of any dimension \cite{lu-w,r8,wa1},
 the integrand depends on second fundamental forms and integrability tensors of the distributions, their
derivatives and curvature invariants, e.g. the Ricci curvature and the mixed scalar curvature.

No attempt has been made to develop integral formulas for general metric-affine spaces equipped with distributions.
The~{Metric-Affine~Geo\-metry} founded by E.\,Cartan
%in 1923--1925
uses an asymmetric connection $\bar\nabla$ with torsion (instead of the Levi-Civita connection $\nabla$);
it generalizes Riemannian Geometry and appears in such context as homogeneous and almost Hermitian manifolds.
The~important distinguished cases are:
{Riemann-Cartan manifolds}, where {metric connections}, i.e., $\bar\nabla g =0$, are used, see e.g. \cite{gps},
and {statistical manifolds}, see \cite{bc16,ch12}, where the torsion is zero and the tensor $\bar\nabla g$ is~symmetric in all its entries.
The theory of affine hypersurfaces in $\RR^{n+1}$ is a natural source of statistical manifolds, see \cite{lsz15}.
Riemann-Cartan spaces are central in gauge theory of gravity, where the torsion is represented by the spin tensor of matter.

In this paper, we obtain integral formulas for general metric-affine spaces (equipped with distributions) and for two distinguished~classes.
 The formulas naturally generalize results for Riemannian case \cite{lu-w,step1,step2,wa1} with the Ricci and mixed scalar curvatures,
 under some conditions they yield splitting of  ambient manifolds (including submersions and twisted products)
and provide geometrical obstructions for existence of distributions and foliations or compact leaves of them.
%Throughout the paper everything (manifolds, distributions, etc.) is assumed to be smooth and oriented.

\section{Preliminaries}\label{subsec:mixedEH}

Let $M^{n+p}$ be a connected smooth manifold with a pseudo-Riemannian metric $g$ of index $q$ and
complementary orthogonal non-degene\-rate distributions ${\cal D}^\top$ and ${\cal D}^\bot$
(subbundles of the tangent bundle $TM$ of ranks
$\dim_{\,\RR}{\cal D}^\top_x=n$ and $\dim_{\,\RR}{\cal D}^\bot_x=p$ for every $x\in M$).
A~distribution ${\cal D}^\top$ is {non-degenerate}, if
$g_x\ (x\in M)$ is a {non-degenerate bilinear form} on ${\cal D}^\top_x\subset T_x M$
for every $x\in M$; in this case, ${\cal D}^\bot$ is also non-degenerate.
When~$q=0$, $g$ is a Riemannian metric (resp. a Lorentz metric~when $q=1$), see~\cite{bf}.
Let $^\top$ and $^\perp$ denote $g$-orthogonal projections onto ${\cal D}^\top$ and ${\cal D}^\bot$, respectively;
for any $X\in\mathfrak{X}_M$ we write $X=X^\top+X^\bot$.
  We~will define several tensors for one of distributions
(say, ${\cal D}^\top$; similar tensors for the second distribu\-tion can be defined using $\,^\bot$ notation).
 The~following convention is adopted for the range~of~indices:
\[
 a,b\ldots{\in}\{1\ldots n\},\quad
 i,j\ldots{\in}\{1\ldots p\}.
\]
One may show that the local adapted orthonormal frame $\{E_a,\,{\cal E}_{i}\}$,
where $\{E_a\}\subset{{\cal D}^\top}$, and $\eps_i=g({\cal E}_{i},{\cal E}_{i})\in\{-1,1\}$, $\eps_a=g(E_a,E_a)\in\{-1,1\}$,
always exists on $M$.

Let
% \[
 $T^\top, h^\top:{\cal D}^\top\times {\cal D}^\top\to{\cal D}^\bot$
%\]
be the integrability tensor and the second fundamental form of~${\cal D}^\top$,
\begin{eqnarray*}
  T^\top(X,Y):=(1/2)\,[X,\,Y]^\perp,\quad h^\top(X,Y): = (1/2)\,(\nabla_X Y+\nabla_Y X)^\perp.
\end{eqnarray*}
The mean curvature vector of ${\cal D}^\top$ is
 $\Ht =\sum\nolimits_a\eps_a h^\top(E_a,E_a)$.
 We call ${\cal D}^\top$ \textit{umbilical}, \textit{harmonic}, or \textit{totally geodesic}, if
 $h^\top=\frac1n \Ht\,g^\top$, $\Ht =0$, or $h^\top=0$, resp.
The~Weingarten operator $A^\top$ (of ${\cal D}^\top$)
 and the operator ${T}^{\top\sharp}$ are defined by
\[
 g(A^\top_Z X,Y)= g(h^\top(X^\top,Y^\top),Z^\bot),\quad
 g({T}^{\top\sharp}_Z X,Y)=g(T^\top(X^\top,Y^\top),Z^\bot).
\]
We will use the following convention for various tensors:
%\[
 ${T}^{\top\sharp}_i := {T}^{\top\sharp}_{{\cal E}_i},\ A^\top_i := A^\top_{ {\cal E}_i }$, ${\cal T}_i
 ={\cal T}_{{\cal E}_i}$ etc.
%\]

One of the simplest curvature invariants of a pseudo-Riemannian manifold $(M,g)$ endowed with
two complementary orthogonal distributions $({\cal D}^\top,{\cal D}^\bot)$
is the \textit{mixed scalar curvature}, i.e., an averaged sectional curvature of planes that non-trivially intersect the distribution ${\cal D}^\top$ and its orthogonal complement ${\cal D}^\bot$, see~\cite{wa1}:
\begin{equation}\label{eq-wal2}
 {\rm S}_{\,\rm mix} =\sum\nolimits_{\,a,i}\eps_a \eps_i\,g(R_{a,i}E_a,\, {\cal E}_{i}).
\end{equation}
Here $R_{X,Y}=[\nabla_Y,\nabla_X] + \nabla_{[X,Y]}$ is the curvature tensor of $\nabla$.
The~following formula \cite{wa1} has found many applications:
\begin{equation}\label{E-PW}
  \Div(\Hb + \Ht) = {\rm S}_{\,\rm mix} + \<h^\bot,h^\bot\> -g(\Hb,\Hb) -\<T^\bot,T^\bot\>
  +\<h^\top,h^\top\> - g(\Ht,\Ht) -\<T^\top,T^\top\> ,
\end{equation}
see survey in \cite{r25,rw-m}.  We use inner products of tensors, e.g.
\begin{eqnarray*}
 \<h^\top,h^\top\>\eq\sum\nolimits_{\,i,j}\eps_i\eps_j\,g(h^\top({\cal E}_i,{\cal E}_j),h^\top({\cal E}_i,{\cal E}_j)),\quad
 \<T^\top,T^\top\>=\sum\nolimits_{\,i,j}\eps_i\eps_j\,g(T^\top({\cal E}_i,{\cal E}_j),T^\top({\cal E}_i,{\cal E}_j)).
\end{eqnarray*}
Let~$\mathfrak{X}_M$ (resp., $\mathfrak{X}_{{\cal D}^\top}$) be the module over $C^\infty(M)$
of all vector fields on $M$ (resp. on~${\cal D}^\top$).
 A \textit{metric-affine space} is a manifold $M$ endowed with a metric $g$ of certain signature
and a linear connection $\bar\nabla:\mathfrak{X}_M\times\mathfrak{X}_M\to\mathfrak{X}_M$ on $TM$ that is
\[
 \bar\nabla_{f X_1+X_2} Y =f\bar\nabla_{X_1} Y+\bar\nabla_{X_2} Y,\quad
 \bar\nabla_X(fY +Z) =X(f)Y+f\bar\nabla_{X}Y +\bar\nabla_{X}Z.
\]
The Levi-Civita connection~$\nabla$ is a unique torsion free connection on $(M,g)$ preserving $g$.
It can be taken as a center of affine space of all connections on $M$.
The difference ${\mathcal T}:=\bar\nabla -\nabla$ is called the \textit{contorsion tensor}.
 Define the (1,2)-tensors ${\cal T}^*$ and $\widehat{\cal T}$ by
\[
 g({\cal T}^*_X Y,Z) = g( {\cal T}_X Z , Y),\quad
 \widehat{\cal T}_X Y = {\cal T}_Y X ,
 \quad X,Y,Z\in\mathfrak{X}_M.
\]
Remark that generally $(\widehat{\cal T})^* \ne \widehat{{\cal T}^*}$.
Indeed,
\[
 g((\widehat{\cal T})^*_X Y,Z)=g(\widehat{\cal T}_X Z,Y)=g({\cal T}_Z X,Y),\quad
 g((\widehat{{\cal T}^*})_XY, Z) = g({\cal T}^*_YX, Z) =g({\cal T}_YZ, X).
\]

A connection $\bar\nabla = \nabla + {\cal T}$
is \textit{metric compatible} if $\bar\nabla g=0$; in~this case,
\begin{equation*}
 {\cal T}^* = -{\cal T}.
\end{equation*}
If $\bar\nabla$ is torsionless and tensor $\bar\nabla g$ is symmetric in all its entries then
$\bar\nabla$ is called a \emph{statistical connection} in literature.
In~this case, the contorsion tensor has the following symmetries:
\begin{equation*}
 \widehat{\cal T} = {\cal T} , \quad {\cal T}^* = {\cal T}.
\end{equation*}

Comparing the curvature tensor
$\bar R_{X,Y}=[\bar\nabla_Y,\bar\nabla_X]+\bar\nabla_{[X,Y]}$ of $\bar\nabla$ with $R$, we~find
\begin{equation}\label{E-RC-2}
 \bar R_{X,Y} - R_{X,Y} = (\nabla_Y {\cal T})_X -(\nabla_X {\cal T})_Y +[{\cal T}_Y,\,{\cal T}_X],\quad X,Y\in\mathfrak{X}_M.
\end{equation}
Define two \textit{mean curvature type vectors of ${\cal T}$} by
 $\Ht_{\cal T} := \sum\nolimits_a\eps_a {\cal T}_a E_a$
 and
 $\Hb_{\cal T} := \sum\nolimits_i\eps_i {\cal T}_i {\cal E}_i$.

\begin{remark}\rm
One can examine the extrinsic geometry also  in terms of ${\bar\nabla}$.
For example,
\begin{eqnarray*}
 \bar h^\top(X,Y) = h^\top(X,Y) +\frac{1}{2}\,({\cal T}_{X} Y +{\cal T}_{Y} X)^{\perp}, \quad X,Y\in{\cal D}^\top,
\end{eqnarray*}
is the second fundamental form of ${\cal D}^\top$ w.r.t. ${\bar\nabla}$,
and its mean curvature vector is
%\[
 ${\bar H}^\top = \Ht +(\Ht_{\cal T})^\bot$.
%\]
\end{remark}

\begin{definition}\rm
 The following function on a metric-affine manifold $(M,g,\bar\nabla)$ endowed with
 two complementary orthogonal distributions $({\cal D}^\top,{\cal D}^\bot)$:
\begin{equation}\label{eq-wal3}
 \bar{\rm S}_{\,\rm mix} = ({1}/{2})\sum\nolimits_{a,i} \eps_a\eps_i\big( g( {\bar R}_{a,i} E_a, {\cal E}_i)
 + g( {\bar R}_{i,a}\,{\cal E}_i, E_a ) \big)
\end{equation}
is called the \textit{mixed scalar curvature} w.r.t.~$\bar\nabla$, see~\eqref{eq-wal2} for the Riemannian case.
\end{definition}

Definition \eqref{eq-wal3} does not depend on the order of distributions and the choice of a frame.
Thus, by~\eqref{E-RC-2} and \eqref{eq-wal2},
\begin{eqnarray}\label{E-new1}
\nonumber
 \bar{\rm S}_{\,\rm mix} \eq \frac{1}{2}\sum\nolimits_{a,i} \eps_a \eps_i
 \big( g( (\nabla_i {\cal T})_a E_a , {\cal E}_i ) - g( (\nabla_a {\cal T})_i E_a, {\cal E}_i )
 + g( (\nabla_a {\cal T})_i\, {\cal E}_i, E_a )\\
 \minus g( (\nabla_i {\cal T})_a\, {\cal E}_i , E_a )
  + g( [{\cal T}_i,\, {\cal T}_a] E_a , {\cal E}_i ) + g( [{\cal T}_a,\, {\cal T}_i]\, {\cal E}_i , E_a ) \big)
  + {\rm S}_{\,\rm mix}.
\end{eqnarray}
 The Divergence Theorem for a vector field $\xi$ on  $(M,g)$ states that
\begin{equation}\label{E-Div-Th}
 \int_M (\Div \xi)\,{\rm d}\vol_g=0,
\end{equation}
 when either $\xi$ has compact support or $M$ is closed.
The~${\cal D}^\bot$-\textit{divergence} of $\xi$ is defined by
$\Div^\perp\xi=\sum\nolimits_{i} \eps_i\,g(\nabla_{i}\,\xi, {\cal E}_i)$,
and for $\xi\in\mathfrak{X}_{{\cal D}^\bot}$ we have
 $\,{\Div}^\bot \xi = \Div \xi+g(\xi,\,\Ht)$.

\section{Integral formulas}
\label{sec:IF}

The main idea of proving integral formulas (as in the case of formulas discussed in the introduction) is to calculate the divergence of a
vector field and use the Stokes Theorem.
In \cite{lu-w}, this approach was applied to vector fields $Z_k=(A^\top_{\Ht})^k\Hb + (A^\bot_{\Hb})^k\Ht$,
on a Riemannian space, namely, for $k=0,1$.
We add to $Z_0=\Hb + \Ht$ (in Section~\ref{sec:2.1})
and $Z_1=A^\top_{\Ht}\Hb + A^\bot_{\Hb}\Ht$ (in Section~\ref{sec:2.2}) certain vector fields on a metric-affine space,
compute their divergence and find integral formulas.

We also work with a closed manifold $M$ equipped with vector fields, distributions, etc.
defined on the complement to the ``set of singularities" $\Sigma$, which is a (possibly empty) union of
closed submanifolds of variable codimension $\ge2$.
The singular case is important since many manifolds admit no smooth (e.g. codimension-one) distributions,
while they admit such distributions and tensors outside some $\Sigma$.
Example of such distributions provide ``open book decompositions" on manifolds, see discussion in \cite{lu-w}.

\begin{lemma}[see Lemma~2 in \cite{lu-w}]\label{L-LuW-2}
Let $\Sigma_1$, ${\rm codim}\, \Sigma_1\ge2$, be a closed submanifold of a Riemannian manifold $(M,g)$,
and $\xi$ a vector field on $M\setminus\Sigma_1$ such that $\|\xi\|_g\in L^2(M,g)$.
Then \eqref{E-Div-Th} holds.
\end{lemma}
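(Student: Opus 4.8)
The plan is to exhaust the singularity $\Sigma_1$ by a shrinking family of tubular neighborhoods and apply the classical divergence theorem on the complement, then control the boundary flux integral as the tubes shrink, showing it vanishes in the limit because $\operatorname{codim}\Sigma_1\ge2$ together with $\|\xi\|_g\in L^2$. This is the standard strategy for removing singularities of codimension $\ge2$ in integral formulas, and it is exactly the argument behind the cited Lemma~2 in \cite{lu-w}.

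First I would fix, for each small $\delta>0$, the tubular neighborhood $U_\delta = \{x\in M : \operatorname{dist}(x,\Sigma_1)<\delta\}$ and set $M_\delta = M\setminus U_\delta$, a compact manifold with smooth boundary $\partial M_\delta$ for almost every small $\delta$ (one may invoke Sard's theorem applied to the distance function, or use the smooth tubular neighborhood structure). Since $\xi$ is smooth on $M\setminus\Sigma_1\supset M_\delta$, the ordinary divergence theorem gives
\begin{equation}\label{E-LuW-step}
 \int_{M_\delta}(\Div\xi)\,{\rm d}\vol_g = \int_{\partial M_\delta} g(\xi,\nu)\,{\rm d}\vol_{g'},
\end{equation}
where $\nu$ is the outward unit normal to $\partial M_\delta$ and $g'$ the induced metric. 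The left side converges to $\int_M(\Div\xi)\,{\rm d}\vol_g$ as $\delta\to0$ by monotone/dominated convergence (assuming, as the formula's use requires, that $\Div\xi$ is integrable). So everything reduces to proving that the boundary term on the right tends to $0$ along a suitable sequence $\delta_k\to0$.

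The main obstacle, and the heart of the lemma, is estimating this boundary flux. By Cauchy--Schwarz applied on $\partial M_\delta$,
\[
 \Bigl|\int_{\partial M_\delta} g(\xi,\nu)\,{\rm d}\vol_{g'}\Bigr|
 \le \Bigl(\int_{\partial M_\delta}\|\xi\|_g^2\,{\rm d}\vol_{g'}\Bigr)^{1/2}
 \bigl(\vol_{g'}(\partial M_\delta)\bigr)^{1/2}.
\]
Because $\operatorname{codim}\Sigma_1 = m\ge2$, the tube $U_\delta$ shrinks so that $\vol_{g'}(\partial M_\delta)$ is of order $\delta^{\,m-1}$, hence bounded as $\delta\to0$ (indeed $\to0$ when $m>1$). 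Meanwhile the hypothesis $\|\xi\|_g\in L^2(M,g)$ means $\int_{U_\delta}\|\xi\|_g^2\,{\rm d}\vol_g\to0$ as $\delta\to0$; writing this bulk integral in Fermi coordinates as an integral over the normal radius $r\in(0,\delta)$ of the slice integrals $\int_{\partial M_r}\|\xi\|_g^2\,{\rm d}\vol_{g'}$ (up to a Jacobian bounded near $\Sigma_1$), one sees the integrand is an integrable function of $r$, so one cannot have $\int_{\partial M_r}\|\xi\|_g^2\,{\rm d}\vol_{g'}$ bounded below along all small $r$. Therefore there is a sequence $\delta_k\to0$ along which the slice integral $\int_{\partial M_{\delta_k}}\|\xi\|_g^2\,{\rm d}\vol_{g'}$ stays bounded (in fact one can extract $\delta_k$ so that it tends to $0$ when $m\ge2$ forces $\vol_{g'}(\partial M_{\delta_k})$ bounded). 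Combining the two factors, the boundary flux tends to $0$ along $\delta_k$. Passing to the limit in \eqref{E-LuW-step} then yields $\int_M(\Div\xi)\,{\rm d}\vol_g=0$, which is precisely \eqref{E-Div-Th}. The delicate point to state carefully is the coarea/Fubini argument that produces the good sequence $\delta_k$ from the $L^2$-integrability, since it is here that the codimension hypothesis and the square-integrability conspire to kill the flux.
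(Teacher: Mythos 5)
The paper gives no proof of this lemma; it is quoted from Lemma~2 of \cite{lu-w}, so your argument must be judged on its own merits. Your overall strategy --- excise a tube $U_\delta$ around $\Sigma_1$, apply the classical divergence theorem on $M_\delta=M\setminus U_\delta$, and kill the boundary flux as $\delta\to0$ using $\operatorname{codim}\Sigma_1\ge2$ together with $\|\xi\|_g\in L^2$ --- is the right one and is essentially the argument behind the cited lemma. You are also right to flag that $\Div\xi\in L^1(M,g)$ must be assumed for the interior integrals to converge; the statement is silent on this, but it is needed and holds in all of the paper's applications, where $\Div\xi$ coincides with a globally defined smooth expression.

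The gap is in the extraction of the good sequence $\delta_k$. Set $f(r)=\int_{\partial M_r}\|\xi\|_g^2\,{\rm d}\vol_{g'}$; the coarea formula and $\|\xi\|_g\in L^2$ give $\int_0^{\delta_0}f(r)\,{\rm d}r<\infty$. But integrability of $f$ near $0$ does not rule out $f$ being bounded below by a positive constant ($f\equiv1$ is integrable on $(0,1)$), and it does not guarantee a sequence $r_k\to0$ along which $f(r_k)$ stays bounded ($f(r)=r^{-1/2}$ is integrable on $(0,1)$ yet tends to $\infty$); so both your ``cannot be bounded below'' claim and your ``bounded subsequence'' conclusion are unjustified as stated. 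What integrability actually yields is $\liminf_{r\to0}\,r f(r)=0$: otherwise $f(r)\ge c/r$ near $0$ and $\int f=\infty$. Choosing $\delta_k\to0$ with $\delta_k f(\delta_k)\to0$ and using $\vol_{g'}(\partial M_r)\le C r^{\,m-1}\le C r$ for $m=\operatorname{codim}\Sigma_1\ge2$ and $r$ small, your Cauchy--Schwarz estimate becomes
\[
\Bigl|\int_{\partial M_{\delta_k}}g(\xi,\nu)\,{\rm d}\vol_{g'}\Bigr|^2\le C\,\delta_k f(\delta_k)\longrightarrow 0,
\]
which closes the argument. Alternatively, the coarea/Sard technicalities can be avoided entirely with cutoffs: take $\lambda_\delta$ equal to $0$ on $U_\delta$, to $1$ off $U_{2\delta}$, with $|\nabla\lambda_\delta|\le C/\delta$; then $\int_M\lambda_\delta\Div\xi=-\int_M g(\nabla\lambda_\delta,\xi)$, and the right-hand side is bounded by $(C/\delta)\,\|\xi\|_{L^2(U_{2\delta})}\vol(U_{2\delta})^{1/2}\le C'\,\delta^{m/2-1}\|\xi\|_{L^2(U_{2\delta})}\to0$ for $m\ge2$.
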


\subsection{Integral formulas with $\bar{\rm S}_{\,\rm mix}$}
\label{sec:2.1}

The divergence of $Z_0$, see \eqref{E-PW}, was calculated in \cite{wa1} and the integral formula
\begin{equation}\label{E-PW-IF}
 \int_{M} \big\{\,{\rm S}_{\,\rm mix} -\|T^\top\|^2 -\|T^\bot\|^2 +\|h^\top\|^2 +\|h^\bot\|^2 -\|\Ht\|^2 -\|\Hb\|^2\big\}\,{\rm d}\vol_g = 0
\end{equation}
was obtained for a closed Riemannian manifold $M$.
 We will denote by $\<B,C\>_{|V}$ the inner product of tensors $B,C$ restricted on the subbundle
$V=({\cal D}^\top\times{\cal D}^\bot)\cup({\cal D}^\bot\times{\cal D}^\top)$.

\begin{lemma}\label{L-QQ-first}
We have
\begin{eqnarray}\label{E-Q1Q2-gen}
\nonumber
 && \Div\big( (\Ht_{\cal T} - \Ht_{{\cal T}^*})^\bot + (\Hb_{\cal T} - \Hb_{{\cal T}^*})^\top \big) =
 2\,(\bar{\rm S}_{\,\rm mix} - {\rm S}_{\,\rm mix}) \\
\nonumber
 && -\,g( \Ht_{\cal T},\,\Hb_{{\cal T}^*}) - g(\Hb_{\cal T},\,\Ht_{{\cal T}^*})
 -g(\Ht_{\cal T} -\Hb_{\cal T} + \Hb_{{\cal T}^*} - \Ht_{{\cal T}^*},\, \Ht - \Hb) \\
 && -\,\< {\cal T} -{\cal T}^* +\widehat{\cal T} -\widehat{{\cal T}^*},\,
 A^\bot\!-T^{\bot\sharp} \!+A^\top\!-T^{\top\sharp}\>
 +\<{\cal T}^*,\,\widehat{\cal T}\>_{\,|\,V}.
 %({\cal D}^\top\times{\cal D}^\bot)\cup({\cal D}^\bot\times{\cal D}^\top)}.
%%%%%%%%%%%%%%%%%%%%%%%%%%%%%%%
\end{eqnarray}
For statistical manifolds, \eqref{E-Q1Q2-gen} reads
\begin{equation*}
 \bar{\rm S}_{\,\rm mix} - {\rm S}_{\,\rm mix} - g( \Ht_{\cal T},\,\Hb_{\cal T}) + (1/2)\,\<{\cal T},\,{\cal T}\>_{\,|\,V} = 0 .
%%%%%%%%%%%%%%%%%%%%%%%%%%%%%%%
\end{equation*}
For Riemann-Cartan manifolds, \eqref{E-Q1Q2-gen} reads
\begin{eqnarray*}
 \Div\big( (\Ht_{\cal T})^\bot + (\Hb_{\cal T})^\top \big) =
 \bar{\rm S}_{\,\rm mix} - {\rm S}_{\,\rm mix}
 +g( \Ht_{\cal T},\,\Hb_{\cal T}) - g( \Ht_{\cal T} - \Hb_{\cal T},\, \Ht - \Hb)\\
 -\< {\cal T}+\widehat{\cal T},\,A^\bot-T^{\bot\sharp} +A^\top-T^{\top\sharp}\>
 - (1/2)\,\<{\cal T},\,\widehat{\cal T}\>_{\,|\,V}.
\end{eqnarray*}
\end{lemma}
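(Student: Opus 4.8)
The plan is to compute directly the divergence of the vector field
\[
W = (\Ht_{\cal T} - \Ht_{{\cal T}^*})^\bot + (\Hb_{\cal T} - \Hb_{{\cal T}^*})^\top ,
\]
and then invoke \eqref{E-new1} to trade the resulting first-order terms for $2(\bar{\rm S}_{\,\rm mix}-{\rm S}_{\,\rm mix})$. Since every term in \eqref{E-Q1Q2-gen} is a pointwise tensorial expression, I would fix $x\in M$ and use an adapted orthonormal frame that is geodesic within each distribution at $x$, i.e. $(\nabla_Y E_a)^\top|_x=0$ and $(\nabla_Y{\cal E}_i)^\bot|_x=0$ for all $Y$. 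This kills the intrinsic connection coefficients and leaves only the mixed ones, which by the definitions of the Weingarten and integrability operators read $(\nabla_{{\cal E}_i}E_a)^\bot=-(A^\bot_{E_a}+T^{\bot\sharp}_{E_a}){\cal E}_i$ and $(\nabla_{E_a}{\cal E}_i)^\top=-(A^\top_{{\cal E}_i}+T^{\top\sharp}_{{\cal E}_i})E_a$.

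Writing $W=P+Q$ with $P=(\Ht_{\cal T}-\Ht_{{\cal T}^*})^\bot\in{\cal D}^\bot$ and $Q=(\Hb_{\cal T}-\Hb_{{\cal T}^*})^\top\in{\cal D}^\top$, I would use $\Div P=\Div^\bot P-g(P,\Ht)$ and its $\top$-analogue. Expanding $\Div^\bot P=\sum_i\eps_i\,g(\nabla_i(\Ht_{\cal T}-\Ht_{{\cal T}^*}),{\cal E}_i)+g((\Ht_{\cal T}-\Ht_{{\cal T}^*})^\top,\Hb)$ and applying $(\nabla_i{\cal T})_a E_a=\nabla_i({\cal T}_a E_a)-{\cal T}_{\nabla_i E_a}E_a-{\cal T}_a\nabla_i E_a$ splits each contribution into a genuine covariant-derivative-of-${\cal T}$ term and a term linear in ${\cal T}$. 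The structural point is that, since $g$ is $\nabla$-parallel, $\nabla{\cal T}^*=(\nabla{\cal T})^*$, so the four covariant-derivative pieces from $\Div^\bot P$ and $\Div^\top Q$ reproduce exactly the four first-order terms of \eqref{E-new1}. Substituting \eqref{E-new1} then gives
\[
\Div W=2(\bar{\rm S}_{\,\rm mix}-{\rm S}_{\,\rm mix})-\sum\nolimits_{a,i}\eps_a\eps_i\big(g([{\cal T}_i,{\cal T}_a]E_a,{\cal E}_i)+g([{\cal T}_a,{\cal T}_i]{\cal E}_i,E_a)\big)+[\,\text{terms linear in }{\cal T}\,].
\]

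It remains to rewrite the two surviving algebraic blocks. For the commutator block I would contract ${\cal T}_i{\cal T}_a$ and ${\cal T}_a{\cal T}_i$ against the frame and use $g({\cal T}^*_XY,Z)=g({\cal T}_XZ,Y)$, $\widehat{\cal T}_XY={\cal T}_YX$ together with $\sum_a\eps_a{\cal T}_aE_a=\Ht_{\cal T}$ and $\sum_i\eps_i{\cal T}^*_i{\cal E}_i=\Hb_{{\cal T}^*}$; this yields the purely algebraic identity
\[
\sum\nolimits_{a,i}\eps_a\eps_i\big(g([{\cal T}_i,{\cal T}_a]E_a,{\cal E}_i)+g([{\cal T}_a,{\cal T}_i]{\cal E}_i,E_a)\big)=g(\Ht_{\cal T},\Hb_{{\cal T}^*})+g(\Hb_{\cal T},\Ht_{{\cal T}^*})-\<{\cal T}^*,\widehat{\cal T}\>_{\,|\,V}.
\]
For the linear block, the $g(P,\Ht)$, $g(Q,\Hb)$ and the mean-curvature correction terms assemble (using $\Ht\in{\cal D}^\bot$, $\Hb\in{\cal D}^\top$) into $-g(\Ht_{\cal T}-\Hb_{\cal T}+\Hb_{{\cal T}^*}-\Ht_{{\cal T}^*},\,\Ht-\Hb)$, while the ${\cal T}_{\nabla E}$-corrections, after inserting the two mixed-connection formulas and collecting the ${\cal T}$, ${\cal T}^*$, $\widehat{\cal T}$, $\widehat{{\cal T}^*}$ contributions, produce $-\<{\cal T}-{\cal T}^*+\widehat{\cal T}-\widehat{{\cal T}^*},\,A^\bot-T^{\bot\sharp}+A^\top-T^{\top\sharp}\>$. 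The delicate point, and the step most prone to bookkeeping errors, is the sign flip from $A+T^\sharp$ (as it occurs in $\nabla_{{\cal E}_i}E_a$) to $A-T^\sharp$ (as in the statement): it arises because $A^\bot$ is self-adjoint and $T^{\bot\sharp}$ skew-adjoint on ${\cal D}^\bot$, which interacts with the transposition built into $\<\cdot,\cdot\>$, and because the four contributions must combine into precisely the pattern ${\cal T}-{\cal T}^*+\widehat{\cal T}-\widehat{{\cal T}^*}$. Collecting the three blocks gives \eqref{E-Q1Q2-gen}.

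Finally, the two special cases follow by substitution. For a statistical connection ${\cal T}^*={\cal T}=\widehat{\cal T}$, hence $\Ht_{{\cal T}^*}=\Ht_{\cal T}$, $\Hb_{{\cal T}^*}=\Hb_{\cal T}$, so $W=0$ and ${\cal T}-{\cal T}^*+\widehat{\cal T}-\widehat{{\cal T}^*}=0$; dividing the remaining relation by $2$ yields the stated identity. For a Riemann--Cartan connection ${\cal T}^*=-{\cal T}$, hence $\Ht_{{\cal T}^*}=-\Ht_{\cal T}$, $\Hb_{{\cal T}^*}=-\Hb_{\cal T}$ and $W=2\big((\Ht_{\cal T})^\bot+(\Hb_{\cal T})^\top\big)$; substituting and dividing by $2$ gives the stated Riemann--Cartan formula.
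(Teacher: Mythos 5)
Your proposal is correct and follows essentially the same route as the paper: the paper expands $\bar{\rm S}_{\,\rm mix}-{\rm S}_{\,\rm mix}=\frac12(Q_1+Q_2)$ from \eqref{E-new1} in an adapted frame with $(\nabla_i E_a)^\top=0$, $(\nabla_a{\cal E}_i)^\bot=0$ at a point, extracts the terms $\Div^\bot(\Ht_{\cal T})$, $\Div^\top(\Hb_{{\cal T}^*})$, etc., and converts them to $\Div$ of the projected fields — exactly the computation you describe read in the opposite direction, including the $A+T^\sharp\to A-T^\sharp$ flip from (skew-)adjointness, the commutator identity producing $g(\Ht_{\cal T},\Hb_{{\cal T}^*})+g(\Hb_{\cal T},\Ht_{{\cal T}^*})-\<{\cal T}^*,\widehat{\cal T}\>_{|V}$, and the substitutions ${\cal T}^*={\cal T}=\widehat{\cal T}$ and ${\cal T}^*=-{\cal T}$ for the two special cases.
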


\begin{proof}
Using \eqref{E-new1} we get $\bar{\rm S}_{\,\rm mix} - {\rm S}_{\,\rm mix} = \frac12\,(Q_1 + Q_2)$, where
\begin{eqnarray*}
 Q_1 \eq \sum\nolimits_{a,i}\eps_a\eps_i\,[\,g((\nabla_i{\cal T})_a E_a, {\cal E}_i) - g((\nabla_a{\cal T})_i E_a, {\cal E}_i)
 + g([{\cal T}_i,\, {\cal T}_a] E_a , {\cal E}_i )\,],\\
 Q_2\eq\sum\nolimits_{a,i}\eps_a\eps_i\,[\,g((\nabla_a{\cal T})_i{\cal E}_i, E_a) -g((\nabla_i{\cal T})_a{\cal E}_i, E_a)
 +g([{\cal T}_a,\, {\cal T}_i]\, {\cal E}_i , E_a )\,].
\end{eqnarray*}
Let $(\nabla_i E_a)^\top=0$ and $(\nabla_a {\cal E}_i)^\bot=0$ at a point $x\in M$, thus
\[
 -\nabla_i E_a=\sum\nolimits_{j}\eps_j\,g((A^\bot_a+T^{\bot\sharp}_a){\mathcal E}_i,{\mathcal E}_j){\mathcal E}_j,\quad
 -\nabla_a\,{\mathcal E}_i=\sum\nolimits_{b}\eps_b\,g((A^\top_i+T^{\top\sharp}_i)E_a,E_b) E_b.
\]
 We calculate at $x$:
\begin{eqnarray*}
  %\sum\nolimits_{a,i} \eps_a \eps_i\,
  g((\nabla_i{\cal T})_a E_a, {\cal E}_i) \eq
  %\sum\nolimits_{a,i} \eps_a \eps_i\,
  g({\cal T}_i E_a +{\cal T}_a {\cal E}_i,\, (A^\bot_a-T^{\bot\sharp}_a){\cal E}_i) +\Div^\bot(\Ht_{\cal T}),\\
%%
 %\sum\nolimits_{a,i} \eps_a \eps_i\,
 g((\nabla_a {\cal T})_i E_a, {\cal E}_i) \eq
 %\sum\nolimits_{a,i} \eps_a \eps_i\,
 g({\cal T}^*_a {\cal E}_i +{\cal T}^*_i E_a,\, (A^\top_i-T^{\top\sharp}_i)E_a) +\Div^\top(\Hb_{{\cal T}^*}),\\
%%
  %\sum\nolimits_{a,i} \eps_a \eps_i\,
  g((\nabla_a {\cal T})_i{\cal E}_i, E_a) \eq
 %\sum\nolimits_{a,i} \eps_a \eps_i\,
 g({\cal T}_a {\cal E}_i +{\cal T}_i E_a,\, (A^\top_i-T^{\top\sharp}_i)E_a) +\Div^\top(\Hb_{\cal T}),\\
%
%\sum\nolimits_{a,i} \eps_a \eps_i\,
 g((\nabla_i {\cal T})_a {\cal E}_i, E_a) \eq
 %\sum\nolimits_{a,i} \eps_a \eps_i\,
 g({\cal T}^*_i E_a +{\cal T}^*_a {\cal E}_i,\, (A^\bot_a-T^{\bot\sharp}_a){\cal E}_i) +\Div^\bot(\Ht_{{\cal T}^*}),\\
%%%%
  %\sum\nolimits_{a,i} \eps_a \eps_i\,
  g([{\cal T}_i,\, {\cal T}_a] E_a, {\cal E}_i ) \eq  g( \Ht_{\cal T},\,\Hb_{{\cal T}^*})
 -
 %\sum\nolimits_{a,i} \eps_a \eps_i\,
 g({\cal T}_i E_a, {\cal T}^*_a{\cal E}_i ),\\
  %\sum\nolimits_{a,i} \eps_a \eps_i\,
  g([{\cal T}_a,\, {\cal T}_i]\, {\cal E}_i, E_a ) \eq g( \Hb_{\cal T},\,\Ht_{{\cal T}^*})
  -
  %\sum\nolimits_{a,i} \eps_a \eps_i\,
  g({\cal T}^*_i E_a, {\cal T}_a{\cal E}_i ),
\end{eqnarray*}
(omitting $\sum\nolimits_{a,i}\eps_a\eps_i$) and find
\begin{eqnarray}\label{E-Q1Q2-gen1}
\nonumber
 Q_1 \eq \Div^\bot(\Ht_{\cal T}) - \Div^\top(\Hb_{{\cal T}^*})
 +\sum\nolimits_{a,i}\eps_a\eps_i\,\big[ g({\cal T}_i E_a +{\cal T}_a {\cal E}_i,\, (A^\bot_a-T^{\bot\sharp}_a){\cal E}_i)\\
\nonumber
 \minus g({\cal T}^*_a {\cal E}_i +{\cal T}^*_i E_a,\, (A^\top_i-T^{\top\sharp}_i)E_a)
  -g({\cal T}_i E_a, {\cal T}^*_a{\cal E}_i ) \big] +g( \Ht_{\cal T},\, \Hb_{{\cal T}^*}),\\
%%%
\nonumber
 Q_2 \eq \Div^\top(\Hb_{\cal T}) -\Div^\bot(\Ht_{{\cal T}^*})
 + \sum\nolimits_{a,i}\eps_a\eps_i\,\big[g({\cal T}_a {\cal E}_i +{\cal T}_i E_a,\, (A^\top_i-T^{\top\sharp}_i)E_a)\\
 \minus g({\cal T}^*_i E_a +{\cal T}^*_a {\cal E}_i,\, (A^\bot_a-T^{\bot\sharp}_a){\cal E}_i)
  -g( {\cal T}_a{\cal E}_i, {\cal T}^*_i E_a )\big] + g( \Hb_{\cal T},\, \Ht_{{\cal T}^*}).
 %%%%%%%%%%%%%%
\end{eqnarray}
From \eqref{E-Q1Q2-gen1}, using equalities
\begin{eqnarray*}
 &&\hskip-5mm \Div^\bot(\Ht_{\cal T}) = \Div( (\Ht_{\cal T})^\bot ) + g(\Ht_{\cal T}, \Ht-\Hb),\quad
  \Div^\top(\Hb_{{\cal T}^*}) = \Div( (\Hb_{{\cal T}^*})^\top) - g(\Hb_{{\cal T}^*}, \Ht-\Hb),\\
 &&\hskip-5mm \Div^\bot(\Ht_{{\cal T}^*}) = \Div( (\Ht_{{\cal T}^*})^\bot) + g((\Ht_{{\cal T}^*}, \Ht{-}\Hb),\quad
  \Div^\top(\Hb_{\cal T}) = \Div( (\Hb_{\cal T})^\top) - g(\Hb_{\cal T}, \Ht{-}\Hb)
\end{eqnarray*}
we obtain
\begin{eqnarray*}
%\label{E-Q1Q2-gen0}
 &&  \Div\big( (\Ht_{\cal T} - \Ht_{{\cal T}^*})^\bot + (\Hb_{\cal T} - \Hb_{{\cal T}^*})^\top \big)
 = 2\,(\bar{\rm S}_{\,\rm mix} - {\rm S}_{\,\rm mix}) \\
 && -\,g( \Ht_{\cal T},\,\Hb_{{\cal T}^*}) - g(\Hb_{\cal T},\,\Ht_{{\cal T}^*})
 - g(\Ht_{\cal T} -\Hb_{\cal T} + \Hb_{{\cal T}^*} - \Ht_{{\cal T}^*},\, \Ht - \Hb) \\
 && -\sum\nolimits_{a,i}\eps_a\eps_i\,\big[\,g( ({\cal T}_i -{\cal T}^*_i) E_a
 +({\cal T}_a -{\cal T}^*_a) {\cal E}_i,\,(A^\bot_a-T^{\bot\sharp}_a){\cal E}_i +(A^\top_i-T^{\top\sharp}_i)E_a)\\
  && -\,g( {\cal T}_a\,{\cal E}_i,\,{\cal T}^*_i E_a) - g({\cal T}^*_a\,{\cal E}_i,\,{\cal T}_i\,E_a)\,\big],
%%%%%%%%%%%%%%%%%%%%%%%%%%%%%%%
\end{eqnarray*}
and hence, \eqref{E-Q1Q2-gen}.
\end{proof}

In the next theorem we generalize \eqref{E-PW-IF}.

\begin{theorem}
%\label{T-IF-02}
Let $(M,g,\bar\nabla)$ be a closed metric-affine space and ${\cal D}^\top$ a distribution defined
on the complement to the ``set of singularities" $\Sigma$, see Section~\ref{subsec:mixedEH}.
If $\|\xi\|_g\in L^2(M,g)$, where
$\xi=\Hb +\Ht\!+\frac12\,(\Ht_{\cal T} -\Ht_{{\cal T}^*})^\bot +\frac12\,(\Hb_{\cal T} -\Hb_{{\cal T}^*})^\top$,
then the following integral formula holds:
\begin{eqnarray*}
 && \hskip-6mm
 \int_{M}\!\big\{\bar{\rm S}_{\rm mix} {-}\<T^\top,T^\top\> -\<T^\bot,T^\bot\> +\<h^\top,h^\top\> +\<h^\bot,h^\bot\>
 -g(\Ht,\Ht) -g(\Hb,\Hb) \\
%%%%%%%%%%%%%%%
 && -\,\frac12\,\big[\,g( \Ht_{\cal T},\, \Hb_{{\cal T}^*}) + g( \Hb_{\cal T},\, \Ht_{{\cal T}^*})
 + g( \Ht_{\cal T} - \Hb_{\cal T} + \Hb_{{\cal T}^*} - \Ht_{{\cal T}^*},\,\Ht - \Hb)\,\big]\\
 && -\,\frac12\,\< {\cal T} -{\cal T}^* +\widehat{\cal T} -\widehat{{\cal T}^*},\,
 A^\bot\!-T^{\bot\sharp} \!+A^\top\!-T^{\top\sharp}\>
 +\frac12\,\<{\cal T}^*,\,\widehat{\cal T}\>_{\,|\,V} \big\}\,{\rm d}\vol_g = 0 .
\end{eqnarray*}
\end{theorem}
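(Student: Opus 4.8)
The plan is to realize the integrand as the divergence of the vector field $\xi$ and then integrate. Observe first that $\xi$ splits as the sum of the field appearing in \eqref{E-PW} and one half of the field whose divergence is computed in Lemma~\ref{L-QQ-first}:
\[
 \xi = (\Hb + \Ht) + \tfrac12\big((\Ht_{\cal T} - \Ht_{{\cal T}^*})^\bot + (\Hb_{\cal T} - \Hb_{{\cal T}^*})^\top\big).
\]
Hence, by linearity of the divergence,
\[
 \Div\xi = \Div(\Hb + \Ht) + \tfrac12\,\Div\big((\Ht_{\cal T} - \Ht_{{\cal T}^*})^\bot + (\Hb_{\cal T} - \Hb_{{\cal T}^*})^\top\big),
\]
so I would substitute \eqref{E-PW} into the first summand and the identity \eqref{E-Q1Q2-gen} of Lemma~\ref{L-QQ-first} into the second.

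The computation then reduces to bookkeeping. From \eqref{E-PW} the first summand contributes the curvature term ${\rm S}_{\,\rm mix}$ together with $\<h^\top,h^\top\>+\<h^\bot,h^\bot\> - g(\Ht,\Ht)-g(\Hb,\Hb) - \<T^\top,T^\top\>-\<T^\bot,T^\bot\>$. Multiplying the right-hand side of \eqref{E-Q1Q2-gen} by $\tfrac12$ turns its leading term $2(\bar{\rm S}_{\,\rm mix} - {\rm S}_{\,\rm mix})$ into $\bar{\rm S}_{\,\rm mix} - {\rm S}_{\,\rm mix}$ and attaches the factor $\tfrac12$ to each of the remaining ${\cal T}$-dependent terms, exactly as they occur in the stated integrand. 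The decisive step is the cancellation of curvature terms: the $+{\rm S}_{\,\rm mix}$ from \eqref{E-PW} cancels the $-{\rm S}_{\,\rm mix}$ coming from $\tfrac12\cdot 2(\bar{\rm S}_{\,\rm mix}-{\rm S}_{\,\rm mix})$, so that only $\bar{\rm S}_{\,\rm mix}$ survives. Collecting all contributions, I expect $\Div\xi$ to equal precisely the integrand displayed in the theorem, with matching signs and the coefficients $\tfrac12$ in front of the $\Ht_{\cal T},\Hb_{\cal T},\Ht_{{\cal T}^*},\Hb_{{\cal T}^*}$ terms, the contorsion pairing, and the $\<{\cal T}^*,\widehat{\cal T}\>_{\,|\,V}$ term.

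It then remains to integrate the identity $\Div\xi=(\text{integrand})$ over $M$ and deduce that the integral vanishes. The delicate point—and the step I expect to require the most care—is the singularity set $\Sigma$, on whose complement $\xi$ and the distributions are defined, so that the plain Divergence Theorem \eqref{E-Div-Th} does not apply directly. Since $\Sigma$ is a union of closed submanifolds of codimension $\ge2$ and the hypothesis $\|\xi\|_g\in L^2(M,g)$ is assumed, I would invoke Lemma~\ref{L-LuW-2} to guarantee $\int_M\Div\xi\,{\rm d}\vol_g=0$ in spite of the singularities; this delivers the asserted integral formula. Everything else is a direct consequence of \eqref{E-PW} and Lemma~\ref{L-QQ-first}, whose pointwise derivation at a point with $(\nabla_i E_a)^\top=(\nabla_a{\cal E}_i)^\bot=0$ is already frame-independent, so no additional globalization argument is needed on $M\setminus\Sigma$.
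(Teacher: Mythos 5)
Your proposal is correct and follows essentially the same route as the paper: the author likewise writes $\Div\xi$ as the sum of \eqref{E-PW} and one half of \eqref{E-Q1Q2-gen}, obtains exactly the stated integrand (with the ${\rm S}_{\,\rm mix}$ cancellation you describe), and then concludes by applying Lemma~\ref{L-LuW-2} under the hypothesis $\|\xi\|_g\in L^2(M,g)$ to handle the singular set $\Sigma$.
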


\begin{proof}
By \eqref{E-PW} and Lemma~\ref{L-QQ-first}, we have on $M\setminus\Sigma$:
\begin{eqnarray}\label{E-div-IF1}
\nonumber
 &&\hskip-2mm \Div\xi = \bar{\rm S}_{\rm mix} {-}\<T^\top,T^\top\> -\<T^\bot,T^\bot\> +\<h^\top,h^\top\> +\<h^\bot,h^\bot\> -g(\Ht,\Ht) -g(\Hb,\Hb) \\
\nonumber
 && -\frac12\,\big[\,g( \Ht_{\cal T},\, \Hb_{{\cal T}^*}) + g( \Hb_{\cal T},\,\Ht_{{\cal T}^*})
 + g( \Ht_{\cal T} -\Hb_{\cal T} + \Hb_{{\cal T}^*} - \Ht_{{\cal T}^*},\, \Ht - \Hb)\,\big] \\
 && -\,\frac12\,\< {\cal T} -{\cal T}^* +\widehat{\cal T} -\widehat{{\cal T}^*},\,
 A^\bot\!-T^{\bot\sharp} \!+A^\top\!-T^{\top\sharp}\>
 +\frac12\,\<{\cal T}^*,\,\widehat{\cal T}\>_{\,|\,V}.
\end{eqnarray}
Thus, the claim follows from \eqref{E-div-IF1} and Lemma~\ref{L-LuW-2}.
\end{proof}

\begin{corollary}
%\label{T-IF-02-st}
Let $(M,g,\bar\nabla)$ be a closed statistical manifold and ${\cal D}^\top$ a distribution defined
on the complement to the ``set of singularities" $\Sigma$. If $\|\Hb +\Ht\|_g\in L^2(M,g)$ then
\begin{eqnarray*}
%\label{E-IF-01-st}
%\nonumber
 &&\hskip-10mm \int_{M} \big\{\,\bar{\rm S}_{\,\rm mix} -\<T^\top,T^\top\> -\<T^\bot,T^\bot\> +\<h^\top,h^\top\> +\<h^\bot,h^\bot\> - g(\Ht,\Ht)\\
%%%%%%%%%%%%%%%
 \minus g(\Hb,\Hb) - g( \Ht_{\cal T},\, \Hb_{\cal T})
 %-2\,\< {\cal T},\, A^\bot-T^{\bot\sharp} +A^\top-T^{\top\sharp}\>
 +(1/2)\,\<{\cal T},\,{\cal T}\>_{\,|\,V}\big\}\,{\rm d}\vol_g = 0 .
\end{eqnarray*}
\end{corollary}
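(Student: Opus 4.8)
The plan is to read off the Corollary by combining the basic divergence identity \eqref{E-PW} with the statistical specialization of Lemma~\ref{L-QQ-first}, and then integrating. First I would recall the two defining symmetries of a statistical connection, ${\cal T}^* = {\cal T}$ and $\widehat{\cal T} = {\cal T}$, which together also force $\widehat{{\cal T}^*} = \widehat{\cal T} = {\cal T}$, so that all four contorsion variants coincide. In particular, from ${\cal T}^*_a = {\cal T}_a$ and ${\cal T}^*_i = {\cal T}_i$ the definitions of the mean-curvature-type vectors give $\Ht_{{\cal T}^*} = \Ht_{\cal T}$ and $\Hb_{{\cal T}^*} = \Hb_{\cal T}$ at once.

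The core step is the pointwise identity supplied by the statistical case of Lemma~\ref{L-QQ-first}, namely ${\rm S}_{\,\rm mix} = \bar{\rm S}_{\,\rm mix} - g(\Ht_{\cal T}, \Hb_{\cal T}) + \frac12\<{\cal T}, {\cal T}\>_{\,|\,V}$; here the divergence on the left of \eqref{E-Q1Q2-gen} vanishes precisely because $\Ht_{{\cal T}^*} = \Ht_{\cal T}$ and $\Hb_{{\cal T}^*} = \Hb_{\cal T}$ make its argument zero. I would substitute this expression for ${\rm S}_{\,\rm mix}$ into \eqref{E-PW}, so that $\Div(\Hb + \Ht)$ equals precisely the integrand claimed in the Corollary.

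It then remains only to integrate over $M$. Since $\|\Hb + \Ht\|_g \in L^2(M, g)$ by hypothesis and the distribution is defined only on the complement of the singular set $\Sigma$ (of codimension $\ge 2$), I would invoke Lemma~\ref{L-LuW-2} to obtain $\int_M \Div(\Hb + \Ht)\,{\rm d}\vol_g = 0$, yielding the stated formula. Equivalently, one may specialize the preceding Theorem directly: its auxiliary field $\xi = \Hb + \Ht + \frac12(\Ht_{\cal T} - \Ht_{{\cal T}^*})^\bot + \frac12(\Hb_{\cal T} - \Hb_{{\cal T}^*})^\top$ collapses to $\Hb + \Ht$ under the symmetries above, and a term-by-term reduction of its integrand --- using $g(\Ht_{\cal T}, \Hb_{{\cal T}^*}) = g(\Hb_{\cal T}, \Ht_{{\cal T}^*}) = g(\Ht_{\cal T}, \Hb_{\cal T})$, the vanishing of ${\cal T} - {\cal T}^* + \widehat{\cal T} - \widehat{{\cal T}^*}$, and $\<{\cal T}^*, \widehat{\cal T}\>_{\,|\,V} = \<{\cal T}, {\cal T}\>_{\,|\,V}$ --- produces the same result.

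Since the analytic content is already carried by \eqref{E-PW}, Lemma~\ref{L-QQ-first}, and Lemma~\ref{L-LuW-2}, I do not expect a genuine obstacle; the delicate point is organizational. What requires care is verifying that the collapse $\xi = \Hb + \Ht$ is exactly what licenses the weaker hypothesis $\|\Hb + \Ht\|_g \in L^2(M, g)$ (rather than an $L^2$ bound on the full auxiliary field of the Theorem), and that the several cross terms in the mean-curvature block, together with the entire $\< {\cal T} - {\cal T}^* + \widehat{\cal T} - \widehat{{\cal T}^*},\, \cdot\>$ contribution, cancel as asserted.
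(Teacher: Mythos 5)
Your proposal is correct and follows essentially the same route as the paper: the Corollary is the specialization of the preceding Theorem (equivalently, of \eqref{E-PW} combined with the statistical case of Lemma~\ref{L-QQ-first}) under the symmetries ${\cal T}^*={\cal T}=\widehat{\cal T}$, which collapse $\xi$ to $\Hb+\Ht$ and reduce the integrand exactly as you describe, after which Lemma~\ref{L-LuW-2} applies. Your term-by-term verification of the cancellations and of why the weaker $L^2$ hypothesis suffices is accurate.
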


\begin{corollary}
Let $(M,g,\bar\nabla)$ be a closed Riemann-Cartan manifold and ${\cal D}^\top$ a distribution defined
on the complement to the ``set of singularities" $\Sigma$. If $\|\bar H^\bot +\bar H^\top\|_g\in L^2(M,g)$ then
\begin{eqnarray*}
 &&\hskip-5mm \int_{M} \big\{\,\bar{\rm S}_{\,\rm mix} -\<T^\top,T^\top\> -\<T^\bot,T^\bot\> +\<h^\top,h^\top\> +\<h^\bot,h^\bot\> -g(\Ht,\Ht) -g(\Hb,\Hb) +g( \Ht_{\cal T},\, \Hb_{\cal T})\\
%%%%%%%%%%%%%%%
 && -\,g( \Ht_{\cal T} - \Hb_{\cal T},\, \Ht - \Hb)
 -\< {\cal T} -\widehat{\cal T},\, A^\bot-T^{\bot\sharp} +A^\top-T^{\top\sharp}\>
 -(1/2)\,\<{\cal T},\,\widehat{\cal T}\>_{\,|\,V}\big\}\,{\rm d}\vol_g = 0 .
\end{eqnarray*}
\end{corollary}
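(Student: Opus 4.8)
The plan is to obtain the Corollary as the $\bar\nabla g=0$ specialization of the Theorem, reading off the bulk terms from the Riemann--Cartan form of Lemma~\ref{L-QQ-first} together with the basic divergence formula \eqref{E-PW}. First I would record the algebraic consequences of the defining identity ${\cal T}^*=-{\cal T}$ (equivalent to $\bar\nabla g=0$): evaluating on the adapted frame gives $\Ht_{{\cal T}^*}=-\Ht_{\cal T}$ and $\Hb_{{\cal T}^*}=-\Hb_{\cal T}$, while the defining relation $\widehat{{\cal T}^*}_XY={\cal T}^*_YX=-{\cal T}_YX$ yields $\widehat{{\cal T}^*}=-\widehat{\cal T}$. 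These three identities let every occurrence of ${\cal T}^*$ in \eqref{E-Q1Q2-gen} be traded for $-{\cal T}$.

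Next I would identify the test field. Substituting $\Ht_{{\cal T}^*}=-\Ht_{\cal T}$ and $\Hb_{{\cal T}^*}=-\Hb_{\cal T}$ into the vector field $\xi$ of the Theorem collapses its correction part $\frac12(\Ht_{\cal T}-\Ht_{{\cal T}^*})^\bot+\frac12(\Hb_{\cal T}-\Hb_{{\cal T}^*})^\top$ to $(\Ht_{\cal T})^\bot+(\Hb_{\cal T})^\top$, so that by the Remark $\xi=\bigl(\Ht+(\Ht_{\cal T})^\bot\bigr)+\bigl(\Hb+(\Hb_{\cal T})^\top\bigr)=\bar H^\top+\bar H^\bot$. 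This is exactly why the hypothesis is phrased as $\|\bar H^\bot+\bar H^\top\|_g\in L^2(M,g)$: it is the norm of the field whose divergence we integrate.

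I would then add two divergence identities on $M\setminus\Sigma$. The Riemann--Cartan case of Lemma~\ref{L-QQ-first} expresses $\Div\bigl((\Ht_{\cal T})^\bot+(\Hb_{\cal T})^\top\bigr)$ as $\bar{\rm S}_{\,\rm mix}-{\rm S}_{\,\rm mix}$ plus contorsion- and mean-curvature-dependent terms; adding \eqref{E-PW}, whose left-hand side is $\Div(\Hb+\Ht)$ and whose leading term is ${\rm S}_{\,\rm mix}$, the two mixed scalar curvatures telescope to $\bar{\rm S}_{\,\rm mix}$, the left-hand sides combine to $\Div\xi$, and the remaining second fundamental form, integrability-tensor, and contorsion contributions assemble into the integrand of the Corollary. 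Since $\|\xi\|_g=\|\bar H^\bot+\bar H^\top\|_g\in L^2(M,g)$ by hypothesis, Lemma~\ref{L-LuW-2} licenses the Divergence Theorem \eqref{E-Div-Th} across the singular set $\Sigma$, and $\int_M\Div\xi\,{\rm d}\vol_g=0$ is the asserted formula.

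Once Lemma~\ref{L-QQ-first} is available the computation is mechanical; the single delicate point is the sign bookkeeping in the contorsion pairings under ${\cal T}^*=-{\cal T}$. In particular one must keep $\widehat{{\cal T}^*}=-\widehat{\cal T}$ carefully distinct from $\widehat{\cal T}$ itself --- the paper already warns that in general $(\widehat{\cal T})^*\ne\widehat{{\cal T}^*}$ --- since a slip here flips the sign of the pairing with $A^\bot-T^{\bot\sharp}+A^\top-T^{\top\sharp}$ and of the concluding $\<{\cal T},\widehat{\cal T}\>_{\,|\,V}$ term. Tracking how the two cross-terms arising from the commutator brackets in \eqref{E-Q1Q2-gen} recombine under ${\cal T}^*=-{\cal T}$ is the only place where genuine care, rather than routine, is required.
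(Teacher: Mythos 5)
Your proof is correct and is exactly the derivation the paper intends: the Corollary is the ${\cal T}^*=-{\cal T}$ specialization of the Theorem (equivalently, of \eqref{E-div-IF1}), with the correction part of the test field collapsing so that $\xi=\bar H^\bot+\bar H^\top$ via the Remark, and Lemma~\ref{L-LuW-2} handling the singular set $\Sigma$. One note on the sign issue you rightly flag as the delicate point: your identities ${\cal T}^*=-{\cal T}$ and $\widehat{{\cal T}^*}=-\widehat{\cal T}$ turn ${\cal T}-{\cal T}^*+\widehat{\cal T}-\widehat{{\cal T}^*}$ into $2({\cal T}+\widehat{\cal T})$, so the pairing term comes out as $-\<{\cal T}+\widehat{\cal T},\,A^\bot-T^{\bot\sharp}+A^\top-T^{\top\sharp}\>$, in agreement with the Riemann--Cartan display in Lemma~\ref{L-QQ-first} and with the later leaf-wise Riemann--Cartan corollary; the ${\cal T}-\widehat{\cal T}$ in the printed statement appears to be a typo rather than a defect in your argument.
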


\begin{definition}\rm
We say that $(M',g')$ is a \textit{leaf of a distribution} ${\cal D}$ on $(M,g)$ if $M'$ is a submanifold of $M$ with induced metric $g'$ and
$T_xM'={\cal D}_x$ for any $x\in M'$. A leaf $(M',g')$ of ${\cal D}$ is said to be
\textit{umbilical}, \textit{harmonic}, or \textit{totally geodesic}, if
 $h^\top=\frac1n\,\Ht g^\top,\ \Ht =0$, or $h^\top=0$, respectively, along~$M'$.
\end{definition}

The next theorem generalizes result in \cite{wa1}.

\begin{theorem}
Let a distribution ${\cal D}^\top$ on a metric-affine space $(M,g,\bar\nabla)$ has a compact leaf $ (M',g')$ with condition
\begin{equation}\label{E-mixed-ai}
 2\Ht = (\Ht_{{\cal T}^*}-\Ht_{\cal T} )^\bot
\end{equation}
on its neighborhood. Then  the following integral formula along the leaf holds:
\begin{eqnarray}\label{E-IF-leaf0}
\nonumber
 &&\hskip-3mm \int_{M'} \!\big\{\bar{\rm S}_{\,\rm mix} - \<T^\bot,T^\bot\> +\<h^\bot,h^\bot\> +\<h^\top,h^\top\>
+\frac12\,\big[ g(\Ht_{\cal T} - \Ht_{{\cal T}^*}, \Hb) + g(\Hb_{\cal T} - \Hb_{{\cal T}^*}, \Ht)\\
 &&\hskip-10mm -g(\Ht_{\cal T}, \Hb_{{\cal T}^*}) -g(\Hb_{\cal T}, \Ht_{{\cal T}^*})
 -\< {\cal T}-{\cal T}^*\!+\widehat{\cal T}-\widehat{{\cal T}^*}, A^\bot \!-T^{\bot\sharp} \!+A^\top\>
 +\<{\cal T}^*,\widehat{\cal T}\>_{\,|\,V}  \big] \big\}\,{\rm d}\vol_{g'} =0 .
\end{eqnarray}
\end{theorem}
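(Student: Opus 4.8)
The plan is to apply the Divergence Theorem \eqref{E-Div-Th} on the closed leaf $M'$ to the \emph{tangent part} of the vector field from the preceding theorem, and to read off the resulting integrand from the already-established identity \eqref{E-div-IF1}. First I would take
$\xi=\Hb+\Ht+\frac12(\Ht_{\cal T}-\Ht_{{\cal T}^*})^\bot+\frac12(\Hb_{\cal T}-\Hb_{{\cal T}^*})^\top$
and split it as $\xi=\xi^\top+\xi^\bot$ with $\xi^\bot=\Ht+\frac12(\Ht_{\cal T}-\Ht_{{\cal T}^*})^\bot$. Hypothesis \eqref{E-mixed-ai} reads exactly $(\Ht_{\cal T}-\Ht_{{\cal T}^*})^\bot=-2\Ht$, so $\xi^\bot=0$ on the given neighborhood of $M'$; hence there $\xi$ coincides with the tangent field $\eta:=\Hb+\frac12(\Hb_{\cal T}-\Hb_{{\cal T}^*})^\top\in\mathfrak{X}_{{\cal D}^\top}$. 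Since $T_xM'={\cal D}^\top_x$, the field $\eta$ is tangent to $M'$, which is precisely what makes the Divergence Theorem on $M'$ applicable.

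Second, I would convert the intrinsic leaf divergence of $\eta$ into ambient quantities. Because the induced connection on $M'$ is the $\top$-projection of $\nabla$, one has $\Div_{M'}\eta=\Div^\top\eta$ for $\eta\in\mathfrak{X}_{{\cal D}^\top}$; and the $\top/\bot$-analogue of the relation $\Div^\bot\xi=\Div\xi+g(\xi,\Ht)$ from Section~\ref{subsec:mixedEH} gives $\Div^\top\eta=\Div\eta+g(\eta,\Hb)$. Thus $\Div_{M'}\eta=\Div\eta+g(\eta,\Hb)$. As $\xi=\eta$ near $M'$, the divergence $\Div\eta=\Div\xi$ is supplied by \eqref{E-div-IF1}; moreover $M'$ being an integral manifold of ${\cal D}^\top$ forces $T^\top=0$ along $M'$, whence $\<T^\top,T^\top\>=0$ and $T^{\top\sharp}=0$, and these drop out of \eqref{E-div-IF1}.

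Third, I would substitute and simplify. Writing $g(\eta,\Hb)=g(\Hb,\Hb)+\frac12\,g(\Hb_{\cal T}-\Hb_{{\cal T}^*},\Hb)$, the summand $+g(\Hb,\Hb)$ cancels the $-g(\Hb,\Hb)$ carried by \eqref{E-div-IF1}. The contorsion-coupling terms pass through unchanged (now with $T^{\top\sharp}=0$, so the coupling is against $A^\bot-T^{\bot\sharp}+A^\top$), producing the $\<\cdots\>$ and $\<{\cal T}^*,\widehat{\cal T}\>_{\,|\,V}$ parts of \eqref{E-IF-leaf0}. The one genuinely delicate step is the mean-curvature block
\[
 -g(\Ht,\Ht)-\tfrac12\,g\big((\Ht_{\cal T}-\Ht_{{\cal T}^*})-(\Hb_{\cal T}-\Hb_{{\cal T}^*}),\,\Ht-\Hb\big)+\tfrac12\,g(\Hb_{\cal T}-\Hb_{{\cal T}^*},\Hb);
\]
here I would again invoke \eqref{E-mixed-ai} in the form $g(\Ht_{\cal T}-\Ht_{{\cal T}^*},\Ht)=-2\,g(\Ht,\Ht)$ (valid since $\Ht\in{\cal D}^\bot$), which cancels the two $g(\Ht,\Ht)$ contributions and collapses the block to $\tfrac12\big[g(\Ht_{\cal T}-\Ht_{{\cal T}^*},\Hb)+g(\Hb_{\cal T}-\Hb_{{\cal T}^*},\Ht)\big]$, exactly the mean-curvature part of \eqref{E-IF-leaf0}. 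Careful bookkeeping of the $\top/\bot$ projections in this block — using the hypothesis in precisely the slots where $\Ht$ is paired against a $\bot$-component — is where I expect the main effort to lie.

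Finally, since $M'$ is compact and $\eta$ is smooth on it, the Divergence Theorem \eqref{E-Div-Th} applied on $M'$ yields $\int_{M'}\Div_{M'}\eta\,{\rm d}\vol_{g'}=0$, and by the identification above this is precisely \eqref{E-IF-leaf0}.
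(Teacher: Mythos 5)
Your proposal is correct and follows essentially the same route as the paper: the paper's own proof establishes the identity \eqref{E-div-IF2} for $\Div^\top\big(\Hb+\frac12(\Hb_{\cal T}-\Hb_{{\cal T}^*})^\top\big)+\Div^\bot\big(\Ht+\frac12(\Ht_{\cal T}-\Ht_{{\cal T}^*})^\bot\big)$ with $T^\top=0$, observes that \eqref{E-mixed-ai} makes the $\Div^\bot$-term vanish on a neighborhood of $M'$, and applies the divergence theorem on the compact leaf to the tangential field $\Hb+\frac12(\Hb_{\cal T}-\Hb_{{\cal T}^*})^\top$. Your derivation of that same identity from \eqref{E-div-IF1} via $\Div^\top\eta=\Div\eta+g(\eta,\Hb)$ and the cancellation $g(\Ht_{\cal T}-\Ht_{{\cal T}^*},\Ht)=-2\,g(\Ht,\Ht)$ is just an equivalent reorganization of the same computation, and your bookkeeping checks out.
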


\begin{proof}
Using $T^\top=0$, \eqref{E-PW}, Lemma~\ref{L-QQ-first} and equalities
\begin{eqnarray*}
 && \Div^\bot(\Hb) = -g(\Hb,\Hb),\quad \Div^\top(\Ht) = -g(\Ht,\Ht),\\
 && \Div^\bot( (\Hb_{\cal T} - \Hb_{{\cal T}^*})^\top) = -g(\Hb_{\cal T}-\Hb_{{\cal T}^*}, \Hb), \\
 && \Div^\top( (\Ht_{\cal T} - \Ht_{{\cal T}^*})^\bot) = -g(\Ht_{\cal T}-\Ht_{{\cal T}^*}, \Ht),
\end{eqnarray*}
we have
\begin{eqnarray}\label{E-div-IF2}
\nonumber
 && \Div^\top\big(\Hb +\frac12\,(\Hb_{\cal T}-\Hb_{{\cal T}^*})^\top\big)
 +\Div^\bot \big(\Ht +\frac12\,(\Ht_{\cal T}-\Ht_{{\cal T}^*})^\bot\big) = \\
\nonumber
 && = \bar{\rm S}_{\,\rm mix} +\<h^\bot,h^\bot\> +\<h^\top,h^\top\> - \<T^\bot,T^\bot\>
 +\frac12\,\big[ g(\Ht_{\cal T} - \Ht_{{\cal T}^*}, \Hb) + g(\Hb_{\cal T} - \Hb_{{\cal T}^*}, \Ht)\\
 && -g(\Ht_{\cal T}, \Hb_{{\cal T}^*}) -g(\Hb_{\cal T}, \Ht_{{\cal T}^*})
 -\< {\cal T}-{\cal T}^*+\widehat{\cal T}-\widehat{{\cal T}^*},\,A^\bot \!-T^{\bot\sharp} \!+A^\top\>
 +\<\,{\cal T}^*,\,\widehat{\cal T}\>_{\,|\,V}
 \big] .
\end{eqnarray}
By conditions \eqref{E-mixed-ai}, the $\Div^\bot$-term in \eqref{E-div-IF2} vanishes along $M'$.
Thus, \eqref{E-IF-leaf0} follows from the Divergence theorem
for $\xi={H}^\bot + \frac12\,(\Hb_{\cal T}-\Hb_{{\cal T}^*})^\top$ on $M'$.
\end{proof}

\begin{corollary}
Let a distribution ${\cal D}^\top$ on a statistical manifold $(M,g,\bar\nabla)$ admits a compact leaf $ (M',g')$
with condition $\Ht=0$ on its neighborhood. Then  the following integral formula holds:
\begin{equation*}
%\label{E-IF-leaf0-st}
 \int_{M'} \!\big\{\bar{\rm S}_{\rm mix} \!- \<T^\bot,T^\bot\> +\<h^\bot,h^\bot\> +\<h^\top,h^\top\>
 - g( \Ht_{\cal T},\, \Hb_{\cal T})\!-\<{\cal T},{\cal T}\>_{|\,V} \big\}{\rm d}\vol_{g'} =0 .
\end{equation*}
\end{corollary}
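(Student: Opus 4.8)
The plan is to obtain this statement as a direct specialization of the preceding Theorem (the leaf-wise formula \eqref{E-IF-leaf0}) to a statistical connection. Recall that for such a connection the contorsion tensor satisfies the symmetries ${\cal T}^* = {\cal T}$ and $\widehat{\cal T} = {\cal T}$, so that also $\widehat{{\cal T}^*} = {\cal T}$, and consequently $\Ht_{{\cal T}^*} = \Ht_{\cal T}$ and $\Hb_{{\cal T}^*} = \Hb_{\cal T}$. The first thing I would check is that the corollary's hypothesis $\Ht = 0$ is exactly the specialization of the structural condition \eqref{E-mixed-ai} required by the Theorem: its right-hand side $(\Ht_{{\cal T}^*} - \Ht_{\cal T})^\bot$ vanishes in the statistical case, so \eqref{E-mixed-ai} reduces to $2\Ht = 0$, i.e. to $\Ht = 0$. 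Hence the Theorem applies on a neighborhood of the compact leaf $(M',g')$, and it remains only to simplify the integrand of \eqref{E-IF-leaf0}.

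The second step is the term-by-term reduction. The curvature and extrinsic terms $\bar{\rm S}_{\,\rm mix}$, $\<T^\bot,T^\bot\>$, $\<h^\bot,h^\bot\>$ and $\<h^\top,h^\top\>$ are untouched. Inside the bracket of \eqref{E-IF-leaf0} the terms $g(\Ht_{\cal T} - \Ht_{{\cal T}^*},\Hb)$ and $g(\Hb_{\cal T} - \Hb_{{\cal T}^*},\Ht)$ vanish because the differences $\Ht_{\cal T} - \Ht_{{\cal T}^*}$ and $\Hb_{\cal T} - \Hb_{{\cal T}^*}$ are zero (and also $\Ht = 0$); the pair $-g(\Ht_{\cal T},\Hb_{{\cal T}^*}) - g(\Hb_{\cal T},\Ht_{{\cal T}^*})$ collapses to $-2\,g(\Ht_{\cal T},\Hb_{\cal T})$ by the symmetry of $g$; the combination ${\cal T} - {\cal T}^* + \widehat{\cal T} - \widehat{{\cal T}^*}$ is identically zero, so the inner product against $A^\bot - T^{\bot\sharp} + A^\top$ drops out entirely; and $\<{\cal T}^*,\widehat{\cal T}\>_{\,|\,V}$ becomes $\<{\cal T},{\cal T}\>_{\,|\,V}$. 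Distributing the overall factor $\frac12$ over the surviving bracket and collecting with the untouched terms yields the asserted integral formula.

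The computation is routine linear algebra driven by the two symmetries, so I expect no conceptual obstacle. The one point that genuinely demands care is the bookkeeping of the numerical coefficient of $\<{\cal T},{\cal T}\>_{\,|\,V}$ (it enters through the single surviving term $\<{\cal T}^*,\widehat{\cal T}\>_{\,|\,V}$ carried by the global factor $\frac12$, in parallel with the closed statistical case treated in the first corollary above), together with the cancellation of the differences of mean curvature type vectors that makes the $\Hb$- and $\Ht$-pairings vanish. Once these are verified, the corollary is immediate: it is a purely algebraic specialization of the Theorem's conclusion \eqref{E-IF-leaf0} under the statistical symmetries, requiring no further integration beyond the Divergence Theorem already invoked in the Theorem's proof.
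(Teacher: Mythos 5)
Your route is exactly the paper's (implicit) one: the corollary carries no separate proof and is intended as the specialization of the leaf-wise formula \eqref{E-IF-leaf0} to a statistical connection, using ${\cal T}^*={\cal T}$ and $\widehat{\cal T}={\cal T}$. Your checks are correct: \eqref{E-mixed-ai} collapses to $\Ht=0$, the two pairings with $\Ht_{\cal T}-\Ht_{{\cal T}^*}$ and $\Hb_{\cal T}-\Hb_{{\cal T}^*}$ vanish, the combination ${\cal T}-{\cal T}^*+\widehat{\cal T}-\widehat{{\cal T}^*}$ is identically zero, and $\<{\cal T}^*,\widehat{\cal T}\>_{\,|\,V}$ becomes $\<{\cal T},{\cal T}\>_{\,|\,V}$.

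The one genuine problem is your final sentence, which asserts that distributing the factor $\frac12$ ``yields the asserted integral formula.'' It does not. The surviving bracket is $-2\,g(\Ht_{\cal T},\Hb_{\cal T})+\<{\cal T},{\cal T}\>_{\,|\,V}$, so after multiplying by $\frac12$ the integrand acquires $-g(\Ht_{\cal T},\Hb_{\cal T})+\frac12\,\<{\cal T},{\cal T}\>_{\,|\,V}$, in agreement with the coefficient $+\frac12$ appearing in the closed-manifold statistical corollary that you yourself invoke as a parallel. The printed statement, however, carries $-\<{\cal T},{\cal T}\>_{\,|\,V}$. You cannot both appeal to that parallel and claim to land on the formula as stated; the two are incompatible. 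Your derivation strongly suggests the printed coefficient is an erratum (every consistent route through \eqref{E-IF-leaf0}, or through the statistical identity of Lemma~\ref{L-QQ-first} combined with the leaf-wise version of \eqref{E-PW}, produces $+\frac12$), but a proof write-up must flag the discrepancy and state which formula it actually proves rather than silently asserting the match.
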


\begin{corollary}
Let a distribution ${\cal D}^\top$ on a Riemann-Cartan manifold $(M,g,\bar\nabla)$ admits a compact leaf $ (M',g')$
with condition ${\bar H}^\top = 0$ on its neighborhood. Then the following integral formula holds:
\begin{eqnarray*}
 && \int_{M'} \!\big\{\bar{\rm S}_{\,\rm mix} - \<T^\bot,T^\bot\> +\<h^\bot,h^\bot\> +\<h^\top,h^\top\>
 + g( \Ht_{\cal T},\, \Hb_{\cal T}) + g( \Ht_{\cal T},\,\Hb) \\
 &&\ \ +\, g( \Hb_{\cal T},\,\Ht) -\<{\cal T}+\widehat{\cal T}, A^\bot \!-T^{\bot\sharp} \!+ A^\top\>
 -(1/2)\,\<{\cal T},\,\widehat{\cal T}\>_{\,|\,V}\big\}\,{\rm d}\vol_{g'} =0 .
\end{eqnarray*}
\end{corollary}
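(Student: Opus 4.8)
The plan is to obtain this corollary as the Riemann--Cartan specialization of the preceding Theorem, whose hypothesis is condition \eqref{E-mixed-ai}. The whole argument reduces to substituting the metric compatibility relation ${\cal T}^*=-{\cal T}$ into the statement of that Theorem along the compact leaf; no new divergence computation is needed, since the divergence identity \eqref{E-div-IF2} and the Divergence Theorem have already done the analytic work.

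First I would check that the stated hypothesis ${\bar H}^\top=0$ is exactly condition \eqref{E-mixed-ai} in this setting. From ${\cal T}^*=-{\cal T}$ and the definition $\Ht_{\cal T}=\sum_a\eps_a{\cal T}_a E_a$ one gets $\Ht_{{\cal T}^*}=-\Ht_{\cal T}$ (and likewise $\Hb_{{\cal T}^*}=-\Hb_{\cal T}$). Hence $(\Ht_{{\cal T}^*}-\Ht_{\cal T})^\bot=-2(\Ht_{\cal T})^\bot$, so \eqref{E-mixed-ai} becomes $\Ht=-(\Ht_{\cal T})^\bot$; combined with the Remark's formula ${\bar H}^\top=\Ht+(\Ht_{\cal T})^\bot$ this is precisely ${\bar H}^\top=0$. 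Thus the Theorem applies along $(M',g')$.

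Next I would feed the same substitutions into the integrand of \eqref{E-IF-leaf0}. Alongside $\Ht_{{\cal T}^*}=-\Ht_{\cal T}$ and $\Hb_{{\cal T}^*}=-\Hb_{\cal T}$ I would record ${\cal T}-{\cal T}^*=2{\cal T}$ and, from $\widehat{\cal T}_XY={\cal T}_YX$, the identity $\widehat{{\cal T}^*}=-\widehat{\cal T}$, so that ${\cal T}-{\cal T}^*+\widehat{\cal T}-\widehat{{\cal T}^*}=2({\cal T}+\widehat{\cal T})$ and $\<{\cal T}^*,\widehat{\cal T}\>_{|V}=-\<{\cal T},\widehat{\cal T}\>_{|V}$. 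The four mean-curvature pairings collapse: $g(\Ht_{\cal T}-\Ht_{{\cal T}^*},\Hb)=2g(\Ht_{\cal T},\Hb)$, $g(\Hb_{\cal T}-\Hb_{{\cal T}^*},\Ht)=2g(\Hb_{\cal T},\Ht)$, while $-g(\Ht_{\cal T},\Hb_{{\cal T}^*})-g(\Hb_{\cal T},\Ht_{{\cal T}^*})=2g(\Ht_{\cal T},\Hb_{\cal T})$. Carrying the overall factor $\frac12$ through, the bracketed block of \eqref{E-IF-leaf0} turns into $g(\Ht_{\cal T},\Hb)+g(\Hb_{\cal T},\Ht)+g(\Ht_{\cal T},\Hb_{\cal T})-\<{\cal T}+\widehat{\cal T},A^\bot-T^{\bot\sharp}+A^\top\>-\frac12\<{\cal T},\widehat{\cal T}\>_{|V}$, which is exactly the claimed integrand; integrating against $\vol_{g'}$ finishes the proof.

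The only delicate point is the sign bookkeeping around $\widehat{{\cal T}^*}$: the paper explicitly warns that $(\widehat{\cal T})^*\ne\widehat{{\cal T}^*}$ in general, so I would derive $\widehat{{\cal T}^*}=-\widehat{\cal T}$ straight from the defining relation $\widehat{{\cal T}^*}_XY=({\cal T}^*)_YX=-{\cal T}_YX=-\widehat{\cal T}_XY$ rather than by manipulating adjoints, and similarly keep track of the $g$-symmetry used in $g(\Hb_{\cal T},\Ht_{{\cal T}^*})=-g(\Ht_{\cal T},\Hb_{\cal T})$. Everything else is routine collection of terms, and there is no genuine obstacle beyond this.
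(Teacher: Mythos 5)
Your proposal is correct and is exactly the paper's intended route: the corollary is obtained by substituting the Riemann--Cartan relation ${\cal T}^*=-{\cal T}$ (hence $\Ht_{{\cal T}^*}=-\Ht_{\cal T}$, $\Hb_{{\cal T}^*}=-\Hb_{\cal T}$, $\widehat{{\cal T}^*}=-\widehat{\cal T}$) into the leafwise integral formula \eqref{E-IF-leaf0} of the preceding Theorem, after identifying its hypothesis \eqref{E-mixed-ai} with ${\bar H}^\top=0$ via ${\bar H}^\top=\Ht+(\Ht_{\cal T})^\bot$. All the sign bookkeeping you carry out, including the direct pointwise derivation of $\widehat{{\cal T}^*}=-\widehat{\cal T}$, checks out and reproduces the stated integrand exactly.
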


\begin{remark}\rm
For ${\cal D}^\bot$ spanned by a unit vector field $N$, set
%\[
 $\overline{\Ric}_{N,N} = \sum\nolimits_a\,\eps_a\,g(\bar R_{N,E_a}N, E_a)$.
%\]
Note that generally $g(\bar R_{N,E_a}N, E_a)\ne g(\bar R_{E_a,N}E_a, N)$.
Let $\eps_N=1$. Similarly to Lemma~\ref{L-QQ-first}, we have
\begin{eqnarray*}
 && \Div( ({\mathcal T}_NN)^\top -(\Ht_{{\mathcal T}^*})^\bot\,) = \overline{\Ric}_{N,N} - {\Ric}_{N,N} + Q,\\
 && Q = g(\Ht_{{\cal T}^*} +{\cal T}_N N,\, H^\bot -(\tr A^\top)N)
 -\<(\widehat{\cal T})^*_N+{\cal T}^*_N, A^\top_N + T^{\top\sharp}_N\>_{\,|{\cal D}^\top} \\
 &&\quad +\,g((\widehat{\cal T})^*_N N +{\cal T}_N N,\Hb) -g(\Ht_{{\cal T}^*}, {\cal T}_NN)
 +\<\widehat{\cal T}_N,{\cal T}^*_N\>_{\,|{\cal D}^\top}.
\end{eqnarray*}
The above yield the integral formula, which for ${\cal T}=0$ reduces to~\eqref{E-sigma2} with $\sigma_2=\tr (A^\top_N)^2$:
\begin{eqnarray*}
 \int_M (2\,\sigma_2-\overline\Ric_{N,N} -Q)\,{\rm d}\vol_g=0.
\end{eqnarray*}
\end{remark}

\subsection{Integral formula with the Ricci curvature $\overline{\Ric}_{\Ht,\Hb}$}
\label{sec:2.2}

The divergence of $Z_1$ for a Riemannian manifold endowed with orthogonal complementary distributions ${\cal D}^\top$ and ${\cal D}^\bot$ has been calculated in \cite{lu-w}:
\begin{eqnarray}\label{EW-AHH-a}
 && \Div(A^\top_{\Ht}\Hb + A^\bot_{\Hb}\Ht) = \Ric_{\Ht,\Hb} + \,Q_1,
\end{eqnarray}
where
\begin{eqnarray}\label{EW-AHH}
\nonumber
 && \hskip-6mm Q_1 = g(\Ht, \nabla_{\Hb}\Ht) +g(\Hb, \nabla_{\Ht}\Hb) \\
\nonumber
 && +\,g(\tr^\bot_g (\nabla_{\centerdot}\, T^\top)({\bm\cdot}\,,\Hb),\Ht)
 +g(\tr^\top_g (\nabla_{\centerdot}\, T^\bot)({\bm\cdot}\,,\Ht),\Hb)\\
\nonumber
 && +\,\<A^\top_{\Ht}, \nabla_{\centerdot}\,\Hb\> +\<A^\bot_{\Hb}, \nabla_{\centerdot}\, \Ht\>
 -g(A^\top_{\Hb}\Ht,\Ht) -g(A^\bot_{\Ht}\Hb,\Hb)\\
\nonumber
 && +\,2\sum\nolimits_a \eps_a\big[\,g(A^\top_{(\nabla_a \Ht)^\bot}\Hb, E_a)
    +g(\nabla_{T^{\top}(\Hb,\,E_a)}E_a, \Ht)\,\big] \\
 && +\,2\sum\nolimits_i \eps_i\big[\,g(A^\bot_{(\nabla_i\Hb)^\top} \Ht, {\cal E}_i)
    +g(\nabla_{T^{\bot}(\Ht,\,{\cal E}_i)}{\cal E}_i, \Hb)\,\big].
\end{eqnarray}
Thus, on a closed manifold $(M,g)$ one has the integral formula, see \cite[Theorem~1]{lu-w},
\begin{equation}\label{EW-Ric-1}
 \int_M (\Ric_{\Ht,\Hb} + \,Q_1)\,{\rm d}\vol_g = 0.
\end{equation}
If the distributions are umbilical, integrable and have constant mean curvature then \eqref{EW-AHH} reads
\begin{equation*}
 Q_1 = -(1/n+1/p)\,g(\Ht,\Ht)\,g(\Hb,\Hb) .
\end{equation*}

\begin{lemma} For the metric-affine case we have
\begin{equation}\label{E-RicHH-a}
 \Div({\cal T}_{\Hb} \Ht +{\cal T}_{\Ht} \Hb) = -(\overline{\Ric}_{\Ht,\Hb} - \Ric_{\Ht,\Hb}) + Q_2,
\end{equation}
 where
 $\overline{\Ric}_{\Ht,\Hb} = {\rm Sym}\big(\sum\nolimits_a\,\eps_a\,g(\bar R_{\Hb,\,E_a}\Ht, E_a)
 +\sum\nolimits_i\,\eps_i\,g(\bar R_{\Ht,\,{\cal E}_i}\Hb, {\cal E}_i)\big)$
 and
\begin{eqnarray}\label{E-RicHH-b}
\nonumber
 && \hskip-6mm Q_2 = {\rm Sym}\Big(
 g(\bar\nabla_{\Hb}\Ht, \Ht_{{\cal T}^*}) +g(\bar\nabla_{\Ht}\Hb, \Hb_{{\cal T}^*})
 -g({\cal T}_{\Hb} \Ht, \Hb) - g({\cal T}_{\Ht} \Hb, \Ht) \\
\nonumber
 && -\sum\nolimits_a \eps_a\,g\big(\nabla_{\Hb}({\cal T}_a \Ht)
 -{\cal T}_{(h^\top+T^\top)(\Hb,E_a) +\nabla_a\Hb} \Ht + {\cal T}_{\Hb}(\bar\nabla_a\Ht),\, E_a\big) \\
 && -\sum\nolimits_i \eps_i\,g\big(\nabla_{\Ht}({\cal T}_i \Hb) - {\cal T}_{(h^\bot+T^\bot)(\Ht,{\cal E}_i)
 +\nabla_i\Ht} \Hb + {\cal T}_{\Ht}(\bar\nabla_i \Hb),\, {\cal E}_i \big)\Big) .
\end{eqnarray}
\end{lemma}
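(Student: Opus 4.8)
The plan is to mirror the proof of Lemma~\ref{L-QQ-first}, but now contracting the curvature-difference identity \eqref{E-RC-2} against the mean curvature vectors $\Ht,\Hb$ rather than against a pair of frame directions. First I would substitute $\Delta R_{X,Y}:=\bar R_{X,Y}-R_{X,Y}=(\nabla_Y{\cal T})_X-(\nabla_X{\cal T})_Y+[{\cal T}_Y,{\cal T}_X]$ into the definition of $\overline{\Ric}_{\Ht,\Hb}$, so that (since for $\nabla$ the symmetrization is redundant and gives the ordinary Ricci curvature $\Ric_{\Ht,\Hb}$ that appears in \eqref{EW-AHH-a})
\[
 \overline{\Ric}_{\Ht,\Hb}-\Ric_{\Ht,\Hb}
 = {\rm Sym}\Big(\sum\nolimits_a \eps_a\,g(\Delta R_{\Hb,E_a}\Ht,E_a)
 + \sum\nolimits_i \eps_i\,g(\Delta R_{\Ht,{\cal E}_i}\Hb,{\cal E}_i)\Big).
\]
As in Lemma~\ref{L-QQ-first}, I would fix at a point $x$ an adapted orthonormal frame with $(\nabla_a E_b)^\top=0$ and $(\nabla_i{\cal E}_j)^\bot=0$, so that the frame-connection coefficients drop out and the relevant traces may be read directly as partial divergences.

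Next I would apply the Leibniz rule to every term in which $\nabla$ differentiates ${\cal T}$, writing for instance
\[
 (\nabla_{E_a}{\cal T})_{\Hb}\Ht
 = \nabla_{E_a}({\cal T}_{\Hb}\Ht) - {\cal T}_{\nabla_{E_a}\Hb}\Ht - {\cal T}_{\Hb}\nabla_{E_a}\Ht ,
\]
and analogously for $(\nabla_{\Hb}{\cal T})_{E_a}\Ht$ and the two $\bot$-versions. The ``leading'' Leibniz pieces $\nabla_{E_a}({\cal T}_{\Hb}\Ht)$ and $\nabla_{{\cal E}_i}({\cal T}_{\Ht}\Hb)$, once traced over the frame and symmetrized in $(\Ht,\Hb)$ (which interchanges the two distributions together with $E_a\leftrightarrow{\cal E}_i$), assemble into $\Div^\top({\cal T}_{\Hb}\Ht)+\Div^\bot({\cal T}_{\Hb}\Ht)+\Div^\top({\cal T}_{\Ht}\Hb)+\Div^\bot({\cal T}_{\Ht}\Hb)=\Div({\cal T}_{\Hb}\Ht+{\cal T}_{\Ht}\Hb)$, the left-hand side of \eqref{E-RicHH-a}. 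The partial-divergence conversions used here (as in Lemma~\ref{L-QQ-first}, where $\Div^\bot$ and $\Div^\top$ were traded for $\Div$ at the cost of $g(\,\cdot\,,\Ht-\Hb)$-terms) are what produce the purely algebraic terms $-g({\cal T}_{\Hb}\Ht,\Hb)-g({\cal T}_{\Ht}\Hb,\Ht)$ in $Q_2$.

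The remaining terms then have to be packaged into \eqref{E-RicHH-b}. The decisive observation is that the quadratic pieces arising from the commutator $[{\cal T}_{E_a},{\cal T}_{\Hb}]$ fuse with the surviving single $\nabla$-derivatives to upgrade $\nabla$ to $\bar\nabla=\nabla+{\cal T}$: the pair $-{\cal T}_{\Hb}\nabla_{E_a}\Ht$ and $-{\cal T}_{\Hb}{\cal T}_{E_a}\Ht$ combine into $-{\cal T}_{\Hb}\bar\nabla_{E_a}\Ht$, while ${\cal T}_{E_a}\nabla_{\Hb}\Ht$ and ${\cal T}_{E_a}{\cal T}_{\Hb}\Ht$ combine and then contract, via the trace identity $\sum_a\eps_a\,g({\cal T}_{E_a}(\,\cdot\,),E_a)=g(\Ht_{{\cal T}^*},\,\cdot\,)$, into $g(\bar\nabla_{\Hb}\Ht,\Ht_{{\cal T}^*})$; the $\bot$-sum yields the $\Hb_{{\cal T}^*}$ counterpart. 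Finally I would dispose of the frame-derivative corrections $-{\cal T}_{\nabla_{E_a}\Hb}\Ht+{\cal T}_{\nabla_{\Hb}E_a}\Ht$ by splitting the mixed Levi-Civita derivatives into their ${\cal D}^\top$- and ${\cal D}^\bot$-parts and expressing these through the second fundamental forms and integrability tensors $h^\top,T^\top$ (and their $\bot$-analogues), which is the source of the combination ${\cal T}_{(h^\top+T^\top)(\Hb,E_a)+\nabla_a\Hb}\Ht$.

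I expect the main obstacle to be precisely this repackaging of the non-divergence terms: keeping the symmetrization ${\rm Sym}$, the two Leibniz expansions, the commutator, the partial-divergence conversions, and the mixed-derivative decompositions mutually consistent so that everything collapses to exactly the stated $Q_2$ with no spurious leftovers. Sign discipline is the delicate point, because the curvature convention $R_{X,Y}=[\nabla_Y,\nabla_X]+\nabla_{[X,Y]}$ reverses the usual order and fixes the overall sign of the curvature contribution $-(\overline{\Ric}_{\Ht,\Hb}-\Ric_{\Ht,\Hb})$, and because ${\cal T}$, ${\cal T}^*$ and $\widehat{\cal T}$ interact asymmetrically with the $\top$- and $\bot$-projections. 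As a safeguard I would verify the bookkeeping against the statistical and Riemann-Cartan specializations, where ${\cal T}^*=\pm{\cal T}$ forces collapses of several terms and provides an independent check.
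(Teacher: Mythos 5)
Your plan reproduces the paper's own proof essentially step for step: substitute \eqref{E-RC-2} into $\overline{\Ric}_{\Ht,\Hb}$, Leibniz-expand the $(\nabla{\cal T})$-terms in an adapted frame, recognize the partial divergences and trade them for full ones at the cost of the algebraic terms $-g({\cal T}_{\Hb}\Ht,\Hb)-g({\cal T}_{\Ht}\Hb,\Ht)$, and fuse the commutator pieces with the surviving first derivatives into $\bar\nabla$ and into $\Ht_{{\cal T}^*},\Hb_{{\cal T}^*}$ via $\sum_a\eps_a\,g({\cal T}_a(\cdot),E_a)=g(\Ht_{{\cal T}^*},\cdot)$ --- exactly the content of the paper's displayed intermediate formula \eqref{E-RicHH2}. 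The only slip is the claim that the traced Leibniz pieces already assemble into all four partial divergences (hence the full divergence): the traces give only $\Div^\top({\cal T}_{\Hb}\Ht)+\Div^\bot({\cal T}_{\Ht}\Hb)$, but since you then correctly invoke the partial-to-full divergence conversion that produces precisely the correction terms of $Q_2$, this does not affect the argument.
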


\begin{proof}
Using \eqref{E-RC-2}, we calculate
\begin{eqnarray}\label{E-RicHH2}
\nonumber
 && \sum\nolimits_a \eps_a\,g(\bar R_{\Hb\,E_a}\Ht - R_{\Hb,\,E_a}\Ht, E_a) = \\
\nonumber
 && ={\rm Sym}\big(\Div^\top({\cal T}_{\Hb} \Ht) +g(\nabla_{\Hb}\Ht, \Ht_{{\cal T}^*}) +g({\cal T}_{\Hb} \Ht, \Ht_{{\cal T}^*}) \\
\nonumber
 && -\sum\nolimits_a \eps_a\big[ g(\nabla_{\Hb}({\cal T}_a \Ht),E_a) -g({\cal T}_{(h^\top+T^\top)(\Hb,E_a)} \Ht,E_a) \\
 && -\,g({\cal T}_{\nabla_a\Hb} \Ht,E_a) +g({\cal T}_{\Hb}(\nabla_a \Ht), E_a)
 +g({\cal T}_{\Hb}({\cal T}_a \Ht),E_a) \big]\big) .
\end{eqnarray}
Summing \eqref{E-RicHH2} with similar formula for
$\sum\nolimits_i \eps_i\,g(\bar R_{{\cal E}_i,\Ht}\Hb - R_{{\cal E}_i,\Ht}\Hb, {\cal E}_i)$
and using equalities
\begin{eqnarray*}
  \Div^\bot({\cal T}_{\Ht}\Hb) = \Div({\cal T}_{\Ht}\Hb) - g(\Ht,{\cal T}_{\Ht}\Hb),\\
  \Div^\top({\cal T}_{\Hb}\Ht) = \Div({\cal T}_{\Hb}\Ht) - g(\Hb,{\cal T}_{\Hb}\Ht),
\end{eqnarray*}
yield \eqref{E-RicHH-a}--\eqref{E-RicHH-b}.
\end{proof}

\begin{theorem}\label{T-IF-05}
Let $(M,g,\bar\nabla)$ be a closed metric-affine space and ${\cal D}^\top$ a distribution defined
on the complement to the ``set of singularities" $\Sigma$.
If $\|\xi\|_g\in L^2(M,g)$, where $\xi=A^\top_{\Ht}\Hb \!+ A^\bot_{\Hb}\Ht \!+ {\cal T}_{\Ht}\Hb +{\cal T}_{\Hb}\Ht$, then
\begin{equation}\label{E-IF-01b}
 \int_{M} \big\{\,\overline{\Ric}_{\Ht,\Hb} + Q_1 + Q_2\big\}\,{\rm d}\vol_g = 0.
\end{equation}
\end{theorem}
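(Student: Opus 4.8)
The plan is to obtain \eqref{E-IF-01b} by superposing two divergence identities that are already at our disposal, and then integrating; no fresh divergence computation is needed. The key observation is that the field in the hypothesis splits as $\xi = Z_1 + ({\cal T}_{\Ht}\Hb + {\cal T}_{\Hb}\Ht)$, where $Z_1 = A^\top_{\Ht}\Hb + A^\bot_{\Hb}\Ht$ is exactly the Weingarten-type field whose divergence was recorded in the Riemannian identity \eqref{EW-AHH-a}, while ${\cal T}_{\Ht}\Hb + {\cal T}_{\Hb}\Ht$ is precisely the metric-affine correction handled by the preceding lemma in \eqref{E-RicHH-a}. So the whole proof reduces to adding \eqref{EW-AHH-a} and \eqref{E-RicHH-a} on $M\setminus\Sigma$ and then passing to the integral by an argument that tolerates the singular set $\Sigma$.

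First I would add the two identities pointwise on $M\setminus\Sigma$, where all tensors entering $\xi$, $Q_1$, $Q_2$ and $\overline{\Ric}_{\Ht,\Hb}$ are smoothly defined. By linearity, $\Div\xi = \Div Z_1 + \Div({\cal T}_{\Ht}\Hb + {\cal T}_{\Hb}\Ht)$. The first summand contributes $\Ric_{\Ht,\Hb} + Q_1$ via \eqref{EW-AHH-a}, and the second contributes the curvature-difference term relating $\overline{\Ric}_{\Ht,\Hb}$ to $\Ric_{\Ht,\Hb}$ together with $Q_2$ via \eqref{E-RicHH-a}. The Levi-Civita Ricci contributions telescope: the $\Ric_{\Ht,\Hb}$ carried by $Z_1$ is absorbed against the $\Ric_{\Ht,\Hb}$ inside the curvature difference, so that only the $\bar\nabla$-Ricci curvature $\overline{\Ric}_{\Ht,\Hb}$ survives, exactly as $\bar{\rm S}_{\rm mix}$ replaced ${\rm S}_{\rm mix}$ in the earlier theorem. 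The auxiliary quantities $Q_1$ and $Q_2$ are simply carried along unchanged. Hence, pointwise on $M\setminus\Sigma$,
\[
 \Div\xi = \overline{\Ric}_{\Ht,\Hb} + Q_1 + Q_2 .
\]

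It then remains to integrate this identity over $M$. Since ${\cal D}^\top$, and with it $\xi$, $Q_1$, $Q_2$ and $\overline{\Ric}_{\Ht,\Hb}$, is defined only off the closed singular set $\Sigma$ of codimension $\ge 2$, the plain Divergence Theorem \eqref{E-Div-Th} does not apply directly; instead I would invoke Lemma~\ref{L-LuW-2}, exactly as in the proof of the $\bar{\rm S}_{\rm mix}$-theorem. The hypothesis $\|\xi\|_g\in L^2(M,g)$ is what licenses this step, guaranteeing that the behaviour of $\xi$ near $\Sigma$ contributes nothing, so that $\int_M (\Div\xi)\,{\rm d}\vol_g = 0$; substituting the displayed pointwise identity gives \eqref{E-IF-01b}. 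The genuinely hard work sits upstream, in establishing \eqref{EW-AHH-a} and the lemma \eqref{E-RicHH-a}; within this theorem the only points demanding care are the sign bookkeeping in the telescoping of $\Ric_{\Ht,\Hb}$ into $\overline{\Ric}_{\Ht,\Hb}$ and the passage from the pointwise identity on $M\setminus\Sigma$ to a global formula through Lemma~\ref{L-LuW-2}. I expect this last transition, where the puncture along $\Sigma$ and the $L^2$ control on $\xi$ must cooperate, to be the main (albeit mild) obstacle.
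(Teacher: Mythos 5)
Your proposal is correct and follows essentially the same route as the paper: add \eqref{EW-AHH-a} to \eqref{E-RicHH-a} pointwise on $M\setminus\Sigma$ and then integrate via Lemma~\ref{L-LuW-2} using the $L^2$ hypothesis on $\xi$. One caveat: your claimed telescoping to $+\overline{\Ric}_{\Ht,\Hb}$ requires the curvature-difference term in \eqref{E-RicHH-a} to enter with a plus sign, whereas the lemma as printed (with $-(\overline{\Ric}_{\Ht,\Hb}-\Ric_{\Ht,\Hb})$) would give $2\Ric_{\Ht,\Hb}-\overline{\Ric}_{\Ht,\Hb}+Q_1+Q_2$ upon addition --- a sign inconsistency that is already present in the paper itself, whose own proof displays $-\overline{\Ric}_{\Ht,\Hb}+Q_1+Q_2$ yet concludes \eqref{E-IF-01b}, so your reading is the one consistent with the theorem's statement.
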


\begin{proof}
From \eqref{EW-AHH-a} and \eqref{E-RicHH-a} we obtain
\begin{equation*}
 \Div(A^\top_{\Ht}\Hb + A^\bot_{\Hb}\Ht +{\cal T}_{\Hb} \Ht +{\cal T}_{\Ht} \Hb) = -\overline{\Ric}_{\Ht,\Hb} + Q_1 + Q_2.
\end{equation*}
Applying Lemma~\ref{L-LuW-2} to \eqref{EW-AHH}, \eqref{E-RicHH-a} and \eqref{E-RicHH-b}, we obtain~\eqref{E-IF-01b}.
\end{proof}

For ${\cal T}=0$, we have $Q_2=0$, and \eqref{E-IF-01b} reduces to \eqref{EW-Ric-1}.
One may get a number of formulas from \eqref{E-IF-01b}.
The~next one generalizes Proposition~3 in \cite{lu-w}.

Recall that ${\cal D}^\top$ has constant mean curvature whenever its mean curvature vector $\Ht$ obeys
$\nabla^\bot\Ht=0$, where $\nabla^\bot$ is the connection in ${\cal D}^\bot$ induced by the Levi-Civita connection on $M$.

\begin{corollary}
Let in conditions of Theorem~\ref{T-IF-05}, distributions ${\cal D}^\top$ and ${\cal D}^\bot$ be umbilical, integrable and have constant mean curvature. Then  \eqref{E-IF-01b}~reads
\begin{eqnarray*}
 && \int_{M} \big\{ \overline{\Ric}_{\Ht,\Hb} -(\frac1n+\frac1p)\,g(\Ht,\Ht)\,g(\Hb,\Hb)
 +{\rm Sym}\big( g(\bar\nabla_{\Ht}\Hb, \Hb_{{\cal T}^*})\\
 && +\,g(\bar\nabla_{\Hb}\Ht, \Ht_{{\cal T}^*}) - g(\Hb,\ {\cal T}_{\Hb -\Ht}\,\Ht) - g(\Ht,\ {\cal T}_{\Ht -\Hb}\,\Hb) \\
%%%%%%%%%%%
 && -\sum\nolimits_a \eps_a\,g( \bar\nabla_{\Hb}({\cal T}_a \Ht) - {\cal T}_{\nabla_a\Hb}\Ht
  + {\cal T}_{\Hb}(\nabla_a \Ht),\ E_a) \\
%%%%%%%%%%%
 && -\sum\nolimits_i \eps_i\,g( \bar\nabla_{\Ht}({\cal T}_i \Hb)
 - {\cal T}_{\nabla_i\Ht} \Hb + {\cal T}_{\Ht}(\nabla_i \Hb),\ {\cal E}_i)\big)\big\}\,{\rm d}\vol_g = 0 .
\end{eqnarray*}
\end{corollary}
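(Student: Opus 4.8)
The plan is to derive the corollary purely by specializing the master integral formula \eqref{E-IF-01b} of Theorem~\ref{T-IF-05}; no fresh divergence computation is required, since the $L^2$-condition and the vanishing of the boundary contribution are already supplied by Theorem~\ref{T-IF-05}. Hence everything reduces to simplifying the two pointwise integrands $Q_1$ from \eqref{EW-AHH} and $Q_2$ from \eqref{E-RicHH-b} under the three structural hypotheses. First I would translate the hypotheses into the forms in which they act: \emph{integrability} gives $T^\top=T^\bot=0$ (hence also $\nabla T^\top=\nabla T^\bot=0$); \emph{umbilicity} gives $h^\top=\frac1n\,\Ht\,g^\top$ and $h^\bot=\frac1p\,\Hb\,g^\bot$; and \emph{constant mean curvature} gives $(\nabla_X\Ht)^\bot=0$ and $(\nabla_X\Hb)^\top=0$ for every $X$. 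I would also keep the bidegrees in view, namely $\Hb\in{\cal D}^\top$ and $\Ht\in{\cal D}^\bot$, since these decide which orthogonal projections survive.

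For $Q_1$ this is exactly the reduction recorded in the text immediately after \eqref{EW-Ric-1}, which I would simply quote: integrability annihilates the two $\tr_g(\nabla T)$ terms and the two $\nabla_{T(\cdot)}$ terms; constant mean curvature annihilates $g(\Ht,\nabla_\Hb\Ht)$, $g(\Hb,\nabla_\Ht\Hb)$, the two inner-product terms $\<A^\top_\Ht,\nabla\Hb\>$ and $\<A^\bot_\Hb,\nabla\Ht\>$, and the terms built from $(\nabla_a\Ht)^\bot$ and $(\nabla_i\Hb)^\top$; and umbilicity reduces the surviving Weingarten terms to $-(\frac1n+\frac1p)\,g(\Ht,\Ht)\,g(\Hb,\Hb)$. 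This supplies the first two summands of the displayed integrand.

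The genuine work is the reduction of $Q_2$, and this is where the main difficulty sits: the simplification is mechanical but demands disciplined term-tracking through \eqref{E-RicHH-b}. The two curvature-free survivors $g(\bar\nabla_\Hb\Ht,\Ht_{{\cal T}^*})$ and $g(\bar\nabla_\Ht\Hb,\Hb_{{\cal T}^*})$ pass through unchanged. In each frame sum I would convert the Levi-Civita derivatives into $\bar\nabla$-derivatives by $\bar\nabla=\nabla+{\cal T}$, so that $\nabla_\Hb({\cal T}_a\Ht)+{\cal T}_\Hb(\bar\nabla_a\Ht)$ becomes $\bar\nabla_\Hb({\cal T}_a\Ht)+{\cal T}_\Hb(\nabla_a\Ht)$ up to a cross term ${\cal T}_\Hb({\cal T}_a\Ht)$ that merely migrates between the two summands. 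Setting $T^\top=0$ turns the mixed argument into $(h^\top+T^\top)(\Hb,E_a)=h^\top(\Hb,E_a)$, and umbilicity evaluates this on the frame; performing the frame sum then folds the resulting contorsion term together with $-g({\cal T}_\Hb\Ht,\Hb)$ into the compact combination $-g(\Hb,{\cal T}_{\Hb-\Ht}\Ht)$, with the symmetric $i$-sum producing $-g(\Ht,{\cal T}_{\Ht-\Hb}\Hb)$. Applying ${\rm Sym}$ to the whole reproduces the bracketed expression in the statement.

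The hard part will be precisely this last consolidation: one must track the umbilicity coefficients produced by $h^\top(\Hb,E_a)$ and $h^\bot(\Ht,{\cal E}_i)$ consistently as they are absorbed into the combined contorsion brackets, and apply the projection conventions ($\Hb\in{\cal D}^\top$, $\Ht\in{\cal D}^\bot$, and all tensors evaluated on projected arguments) uniformly across both the $a$-sum and the $i$-sum. Once $Q_1$ and $Q_2$ are in reduced form, substitution into \eqref{E-IF-01b} yields the displayed formula at once.
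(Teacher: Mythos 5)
Your overall route is the right one: the paper gives no separate proof of this corollary, and the intended argument is exactly the specialization of \eqref{E-IF-01b} that you describe --- quote the reduction of $Q_1$ recorded after \eqref{EW-Ric-1}, and simplify $Q_2$ from \eqref{E-RicHH-b} using $T^\top=T^\bot=0$, umbilicity and constant mean curvature. Your observation that the cross terms cancel exactly in the swap $\nabla_{\Hb}({\cal T}_a \Ht)+{\cal T}_{\Hb}(\bar\nabla_a\Ht)=\bar\nabla_{\Hb}({\cal T}_a \Ht)+{\cal T}_{\Hb}(\nabla_a\Ht)$ is correct, and the bidegree bookkeeping ($\Hb\in{\cal D}^\top$, $\Ht\in{\cal D}^\bot$) is handled properly.

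However, the step you yourself single out as the hard part --- folding the umbilicity contribution into $-g(\Hb,{\cal T}_{\Hb-\Ht}\Ht)$ --- is asserted rather than executed, and it does not come out as you claim. With the paper's normalization $h^\top=\frac1n\,\Ht g^\top$ one has $(h^\top+T^\top)(\Hb,E_a)=\frac1n\,g(\Hb,E_a)\,\Ht$, so the frame sum $\sum_a\eps_a\,g({\cal T}_{h^\top(\Hb,E_a)}\Ht,E_a)$ evaluates to $\frac1n\,g(({\cal T}_{\Ht}\Ht)^\top,\Hb)$, not to $g(\Hb,{\cal T}_{\Ht}\Ht)$; the $i$-sum likewise produces a factor $\frac1p$. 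Combining with the term $-g({\cal T}_{\Hb}\Ht,\Hb)$ already present in \eqref{E-RicHH-b} therefore gives $-g(\Hb,{\cal T}_{\Hb}\Ht)+\frac1n\,g(\Hb,{\cal T}_{\Ht}\Ht)$ rather than the clean combination $-g(\Hb,{\cal T}_{\Hb-\Ht}\Ht)$ appearing in the statement. You must either carry these coefficients through explicitly --- in which case the two consolidated contorsion terms acquire factors $\frac1n$ and $\frac1p$ and you should note the resulting discrepancy with the displayed formula --- or justify why they disappear. As written, the proposal glosses over precisely the coefficient tracking that it identifies as the crux, so the derivation of the stated formula is not actually established.
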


\section{Splitting results}
%\label{sub:appl}

In this section, we consider distributions $({\cal D}^\top,{\cal D}^\bot)$ on
a metric-affine manifold
$(M,g,\bar\nabla)$, satisfying some geometrical
conditions, and prove non-existence and splitting results, which follow from integral formulas of Section~\ref{sec:IF}.
In the sequel, we assume that $g>0$.
 We omit similar results for (co)dimension one distributions and foliations
and consequences for harmonic and Riemannian submersions, which follow from results below.

We say that $(M,g)$ endowed with a distribution ${\cal D}^\top$ \textit{splits} if $M$
is locally isometric to a product with canonical foliations tangent to ${\cal D}^\top$ and
${\cal D}^\bot$. Remark that if a simply connected manifold splits then it is the direct product.

The next conditions are introduced to simplify the presentation of results:
\begin{eqnarray}\label{E-cond4}
 && {\cal T}_X Y=0={\cal T}_Y X,\quad {\cal T}^*_X Y=0={\cal T}^*_Y X\quad (X\in{\cal D}^\top,\ Y\in{\cal D}^\bot), \\
\label{E-cond5}
 && g(\Ht_{\cal T}- \Ht_{{\cal T}^*}\,,\, H^\bot) = 0.
\end{eqnarray}
For example, \eqref{E-cond4} provides vanishing of last two lines in \eqref{E-div-IF1}.

 Some recent extensions of \eqref{E-Div-Th} to non-compact case are discussed in~\cite{step1}.
Applying S.T.Yau version of Stokes' theorem on a complete open Riemannian manifold $(M,g)$ yields the following.

\begin{lemma}[see Proposition~1 in \cite{csc2010}]\label{L-Div-1}
 Let $(M,g)$ be a complete open Riemannian manifold endowed with a vector field $\xi$
 such that $\Div\xi\ge0$. If the norm $\|\xi\|_g\in L^1(M,g)$ then $\Div \xi\equiv0$.
\end{lemma}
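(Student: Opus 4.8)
The plan is to combine the divergence theorem for compactly supported fields, \eqref{E-Div-Th}, with a family of cutoff functions adapted to the distance function, arranged so that the $L^1$ hypothesis annihilates the gradient (boundary-type) term while the sign condition $\Div\xi\ge0$ controls the remaining term through monotone convergence. First I would fix a point $o\in M$ and set $\rho=d(\cdot,o)$. Completeness of $(M,g)$ together with Hopf--Rinow guarantees that $\rho$ is proper, i.e., each closed ball $\overline{B(o,R)}$ is compact, so any cutoff built from $\rho$ will have compact support. I would then introduce the Lipschitz cutoffs
\[
 \phi_k=\begin{cases}
 1,& \rho\le k,\\
 2-\rho/k,& k\le\rho\le 2k,\\
 0,& \rho\ge 2k,
 \end{cases}
\]
which satisfy $0\le\phi_k\le1$, form a non-decreasing sequence with $\phi_k\to1$ pointwise, and obey $\|\nabla\phi_k\|_g\le 1/k$ almost everywhere (using $\|\nabla\rho\|_g=1$ a.e.), with $\nabla\phi_k$ supported in the annulus $A_k=B(o,2k)\setminus B(o,k)$.

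Next I would apply the divergence theorem to the compactly supported field $\phi_k\,\xi$. From $\Div(\phi_k\,\xi)=\phi_k\,\Div\xi+g(\nabla\phi_k,\xi)$ and $\int_M\Div(\phi_k\,\xi)\,{\rm d}\vol_g=0$ I obtain
\[
 \int_M \phi_k\,\Div\xi\,{\rm d}\vol_g=-\int_M g(\nabla\phi_k,\xi)\,{\rm d}\vol_g .
\]
The right-hand side is controlled by
\[
 \Big|\int_M g(\nabla\phi_k,\xi)\,{\rm d}\vol_g\Big|\le \frac1k\int_{A_k}\|\xi\|_g\,{\rm d}\vol_g\le \frac1k\,\|\xi\|_{L^1(M,g)},
\]
which tends to $0$ as $k\to\infty$. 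On the left-hand side, since $\Div\xi\ge0$ and $\phi_k\nearrow1$, the monotone convergence theorem gives $\int_M\phi_k\,\Div\xi\,{\rm d}\vol_g\to\int_M\Div\xi\,{\rm d}\vol_g$ (a priori possibly $+\infty$). Combining the two displays forces $\int_M\Div\xi\,{\rm d}\vol_g=0$; as the integrand is nonnegative and continuous, this yields $\Div\xi\equiv0$.

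The main obstacle I anticipate is analytic rather than conceptual: the distance function $\rho$ is merely Lipschitz, with gradient defined a.e.\ and discontinuous across the cut locus, so the application of \eqref{E-Div-Th} to $\phi_k\,\xi$ must be justified for a Lipschitz cutoff tested against the $C^1$ field $\xi$. I would dispose of this either by smoothing each $\phi_k$ to a $C^\infty$ function with the same annular support and the same uniform gradient bound $O(1/k)$, or by invoking the integration-by-parts formula for Lipschitz functions against compactly supported $C^1$ fields. A secondary point worth emphasizing is that $\Div\xi$ is \emph{not} assumed integrable a priori; this is precisely why monotone convergence (admitting the value $+\infty$) is the correct device, and the gradient estimate then forces both integrability and vanishing at once.
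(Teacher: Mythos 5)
Your proof is correct. Note, however, that the paper offers no proof of this lemma at all: it is imported verbatim from Proposition~1 of \cite{csc2010}, which in turn rests on S.~T.~Yau's version of Stokes' theorem for complete open manifolds, and your cutoff-function argument (compactly supported $\phi_k\xi$ fed into \eqref{E-Div-Th}, the $1/k$ gradient bound killing the annular term via the $L^1$ hypothesis, and monotone convergence on the nonnegative term) is precisely the standard proof of that underlying result. So you have not deviated from the paper's route so much as supplied, correctly and with the Lipschitz-regularity caveat properly addressed, the argument that the paper delegates to a citation.
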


\subsection{Harmonic distributions}
%\label{subsec:harm}

Note that condition \eqref{E-mixed-ai} on $M$ when ${\cal T}=0$ reduces to $H^\top=0$, i.e., ${\cal D}^\top$ is harmonic.

Next theorem generalizes Theorem~5 in \cite{step1}.

\begin{theorem}
Let ${\cal D}^\top$ and ${\cal D}^\bot$ be
complementary ortho\-gonal integrable distributions with \eqref{E-mixed-ai}, \eqref{E-cond4} and \eqref{E-cond5} on a complete open metric-affine space $(M,g,\bar\nabla)$.
Suppose that the leaves $(M',g')$ of ${\cal D}^\top$ obey condition $\|\xi_{\,|M'}\|_{g'}\in L^1(M',g')$, where
$\xi={H}^\bot +\frac12\,(\Hb_{\cal T} - \Hb_{{\cal T}^*})^\top$. If  $\bar{\rm S}_{\,\rm mix}\ge0$ then $M$ splits.
\end{theorem}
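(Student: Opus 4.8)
The plan is to localize the leaf integral formula to a single noncompact leaf of ${\cal D}^\top$ and combine it with the Yau-type divergence Lemma~\ref{L-Div-1}. Since ${\cal D}^\top$ is integrable, I fix a leaf $(M',g')$. The field $\xi={H}^\bot+\frac12(\Hb_{\cal T}-\Hb_{{\cal T}^*})^\top$ lies in ${\cal D}^\top$ and is therefore tangent to $M'$; because for $\xi,E_a\in{\cal D}^\top$ the normal part of $\nabla_a\xi$ is $g$-orthogonal to $E_a$, the intrinsic divergence $\Div_{M'}\xi$ coincides with $\Div^\top\xi$. Starting from identity \eqref{E-div-IF2}, condition \eqref{E-mixed-ai} is precisely what makes the $\Div^\bot$-block on its left-hand side vanish (indeed $2\Ht=(\Ht_{{\cal T}^*}-\Ht_{\cal T})^\bot$ forces $\Ht+\frac12(\Ht_{\cal T}-\Ht_{{\cal T}^*})^\bot=0$), so the left-hand side collapses to $\Div^\top\xi=\Div_{M'}\xi$.

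The heart of the argument is to reduce the right-hand side of \eqref{E-div-IF2} to a non-negative quantity. Integrability of both distributions gives $T^\top=T^\bot=0$, hence $T^{\top\sharp}=T^{\bot\sharp}=0$ and the $\<T^\bot,T^\bot\>$ term disappears. Condition \eqref{E-cond5} kills $g(\Ht_{\cal T}-\Ht_{{\cal T}^*},\Hb)$, while condition \eqref{E-cond4} --- exactly as the author already records for the last two lines of \eqref{E-div-IF1} --- annihilates the contorsion pairings $\<{\cal T}-{\cal T}^*+\widehat{\cal T}-\widehat{{\cal T}^*},\,A^\bot+A^\top\>$ and $\<{\cal T}^*,\widehat{\cal T}\>_{\,|\,V}$, whose arguments are mixed, together with the cross terms $g(\Ht_{\cal T},\Hb_{{\cal T}^*})$ and $g(\Hb_{\cal T},\Ht_{{\cal T}^*})$. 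The one remaining term $g(\Hb_{\cal T}-\Hb_{{\cal T}^*},\Ht)$ I would eliminate by re-substituting \eqref{E-mixed-ai} for $\Ht$. What then survives is
\[
 \Div_{M'}\xi=\bar{\rm S}_{\,\rm mix}+\<h^\top,h^\top\>+\<h^\bot,h^\bot\>.
\]
Because $g>0$, the last two summands are sums of squares, so together with $\bar{\rm S}_{\,\rm mix}\ge0$ I obtain $\Div_{M'}\xi\ge0$ on $M'$.

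With the sign established I apply Lemma~\ref{L-Div-1} to the complete open Riemannian leaf $(M',g')$: the hypothesis $\|\xi_{\,|M'}\|_{g'}\in L^1(M',g')$ and $\Div_{M'}\xi\ge0$ force $\Div_{M'}\xi\equiv0$. Since the three summands are separately non-negative, each vanishes identically; in particular $\<h^\top,h^\top\>=\<h^\bot,h^\bot\>=0$, i.e. $h^\top=0$ and $h^\bot=0$. As the leaf was arbitrary, both ${\cal D}^\top$ and ${\cal D}^\bot$ are totally geodesic on all of $M$ (and $\bar{\rm S}_{\,\rm mix}\equiv0$).

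It remains to pass from total geodesy to splitting. Two complementary orthogonal distributions that are simultaneously integrable and totally geodesic determine a pair of orthogonal totally geodesic foliations, and by the classical local de Rham decomposition $M$ is then locally isometric to the product of a leaf of ${\cal D}^\top$ with a leaf of ${\cal D}^\bot$, the canonical foliations being tangent to ${\cal D}^\top$ and ${\cal D}^\bot$; thus $M$ splits. I expect the real obstacle to be the right-hand-side collapse of the second paragraph: one must verify that \eqref{E-cond4}, \eqref{E-cond5} and \eqref{E-mixed-ai} \emph{jointly} remove every contorsion and mixed term --- not merely the manifestly mixed ones --- and in particular that $g(\Hb_{\cal T}-\Hb_{{\cal T}^*},\Ht)$ really cancels. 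A secondary, more technical worry is the completeness of the noncompact leaf required by Lemma~\ref{L-Div-1}, since leaves of a foliation on a complete manifold need not be complete a priori.
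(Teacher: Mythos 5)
Your proposal follows the paper's proof exactly: the paper likewise reduces \eqref{E-div-IF2} under \eqref{E-mixed-ai}, \eqref{E-cond4}, \eqref{E-cond5} and integrability to $\Div^\top\xi=\bar{\rm S}_{\,\rm mix}+\<h^\bot,h^\bot\>+\<h^\top,h^\top\>$ (its equation \eqref{E-th4}), applies Lemma~\ref{L-Div-1} to each leaf to conclude $h^\top=0=h^\bot$, and invokes the de Rham decomposition theorem. The two caveats you flag --- that the contorsion cross-terms such as $g(\Hb_{\cal T}-\Hb_{{\cal T}^*},\Ht)$ and $g(\Ht_{\cal T},\Hb_{{\cal T}^*})$ really all cancel under the stated conditions, and that the leaves are complete so that Lemma~\ref{L-Div-1} applies --- are equally present in the paper, which disposes of the reduction with the single phrase ``by conditions''.
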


\begin{proof}
By conditions, we have
\begin{equation}\label{E-th4}
 \Div^\top \xi = \bar{\rm S}_{\,\rm mix} +\<h^\bot,h^\bot\> +\<h^\top,h^\top\>.
\end{equation}
Applying Lemma~\ref{L-Div-1} to each leaf (a complete open manifold),
and since $\bar{\rm S}_{\,\rm mix}\ge0$, we get $\Div\xi=0$.
Thus, $h^\top=0=h^\bot$. By de Rham decomposition theorem, $(M,g)$ splits.
\end{proof}

Note that condition $\|\,\xi_{|M'}\|_{g'}{\in} L^1(M',g')$ is satisfied on any compact leaf $(M',g')$ of~${\cal D}^\top$.

Next two results generalize Theorem~2 and Corollary~4 in \cite{wa1}.
% for foliated Riemannian manifolds.

\begin{corollary}\label{T-w1}
Let $(M,g,\bar\nabla)$ be a metric-affine space endowed with a distribution ${\cal D}^\top$
with integrable normal bundle and conditions \eqref{E-mixed-ai}, \eqref{E-cond4} and \eqref{E-cond5}.
Then~${\cal D}^\top$ has no complete open leaves $(M',g')$ with the properties
$\bar{\rm S}_{\,{\rm mix}\,|M'}>0$ and $\|\xi_{\,|M'}\|_{g'}\in L^1(M',g')$, where
$\xi={H}^\bot +\frac12\,(\Hb_{\cal T} - \Hb_{{\cal T}^*})^\top$.
In particular, there are no compact leaves with $\bar{\rm S}_{\,{\rm mix}\,|M'}>0$.
\end{corollary}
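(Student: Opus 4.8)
The plan is to argue by contradiction, leveraging the immediately preceding Theorem (the splitting result) together with its governing divergence identity \eqref{E-th4}. The logical shape is identical to how one passes from a splitting theorem to a non-existence statement: the splitting hypothesis $\bar{\rm S}_{\,\rm mix}\ge 0$ forces the relevant second fundamental forms to vanish, and a strict inequality $\bar{\rm S}_{\,\rm mix}>0$ is then incompatible with that conclusion.

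\smallskip
\noindent\textbf{Main steps.} First I would observe that the hypotheses here are precisely those of the preceding Theorem, restricted to a single leaf: $\mathcal D^\top$ has integrable normal bundle (so $\mathcal D^\bot$ is integrable and \eqref{E-PW} applies with $T^\bot=0$ on the relevant terms), conditions \eqref{E-mixed-ai}, \eqref{E-cond4}, \eqref{E-cond5} hold, and we restrict attention to a complete open leaf $(M',g')$. Under exactly these conditions, identity \eqref{E-th4} gives, on the leaf,
\begin{equation*}
 \Div^\top \xi = \bar{\rm S}_{\,\rm mix} +\<h^\bot,h^\bot\> +\<h^\top,h^\top\>,
\end{equation*}
with $\xi={H}^\bot +\frac12\,(\Hb_{\cal T} - \Hb_{{\cal T}^*})^\top$. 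Since $g>0$ is assumed throughout this section, the two squared-norm terms $\<h^\bot,h^\bot\>$ and $\<h^\top,h^\top\>$ are nonnegative, and the hypothesis $\bar{\rm S}_{\,{\rm mix}\,|M'}>0$ makes the right-hand side \emph{strictly} positive at every point of $M'$, hence $\Div^\top\xi>0$ everywhere on the leaf.

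\smallskip
\noindent\textbf{Closing the argument.} Next I would invoke Lemma~\ref{L-Div-1} (the Yau--Stokes version): the leaf $(M',g')$ is a complete open Riemannian manifold, the hypothesis $\|\xi_{\,|M'}\|_{g'}\in L^1(M',g')$ supplies the integrability condition, and $\Div^\top\xi\ge 0$ holds (in fact strictly). The Lemma then forces $\Div^\top\xi\equiv 0$ on $M'$, contradicting the strict positivity established above. This contradiction shows no such complete open leaf can exist, which is the assertion. For the final sentence I would note that a compact leaf automatically satisfies $\|\xi_{\,|M'}\|_{g'}\in L^1(M',g')$ (integrating a continuous function over a compact manifold), so the compact case is the special case obtained by dropping the $L^1$ hypothesis, giving non-existence of compact leaves with $\bar{\rm S}_{\,{\rm mix}\,|M'}>0$.

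\smallskip
\noindent\textbf{Expected obstacle.} The only delicate point is making sure identity \eqref{E-th4} really holds on the leaf under the stated hypotheses rather than on all of $M$. This requires checking that conditions \eqref{E-cond4} and \eqref{E-cond5} kill the contorsion-dependent terms in the leaf-wise divergence formula \eqref{E-IF-leaf0}/\eqref{E-div-IF2} exactly as in the proof of the preceding Theorem, and that \eqref{E-mixed-ai} (which here means the $\Div^\bot$-term drops) together with integrability $T^\top=0=T^\bot$ reduces the integrand to $\bar{\rm S}_{\,\rm mix}+\<h^\bot,h^\bot\>+\<h^\top,h^\top\>$. Once that reduction is in hand, the analytic part is routine and the sign argument is immediate.
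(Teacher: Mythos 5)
Your proposal is correct and follows essentially the same route as the paper: reduce to the leaf-wise identity \eqref{E-th4} via \eqref{E-div-IF2} (with $T^\bot=0$ from integrability of the normal bundle and the contorsion terms killed by \eqref{E-mixed-ai}, \eqref{E-cond4}, \eqref{E-cond5}), then apply Lemma~\ref{L-Div-1} on the complete open leaf to force $\Div^\top\xi\equiv0$, which contradicts the strict positivity of $\bar{\rm S}_{\,\rm mix}+\<h^\top,h^\top\>+\<h^\bot,h^\bot\>$. The compact case via automatic $L^1$ integrability is also exactly as intended.
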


\begin{proof}
Let $(M',g')$ be a complete open leaf obeying the conditions.
By \eqref{E-div-IF2}, we have \eqref{E-th4} on $M'$. Applying Lemma~\ref{L-Div-1}
to the leaf, and since $\bar{\rm S}_{\,\rm mix}>0$, we get $\Div\xi=0$.
The above yields $h^\top=0=h^\bot$ and $\bar{\rm S}_{\,\rm mix}=0$ -- a contradiction.
\end{proof}

\begin{corollary}
A codimension one distribution ${\cal D}^\top$ of a metric-affine space $(M,g,\bar\nabla)$
with the Ricci curvature $\overline\Ric>0$ and
the properties \eqref{E-mixed-ai}, \eqref{E-cond4} and \eqref{E-cond5} has no compact leaves.
\end{corollary}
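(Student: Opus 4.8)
The plan is to obtain this as a direct consequence of Corollary~\ref{T-w1}: that corollary already forbids compact leaves once the mixed scalar curvature is positive and the normal bundle is integrable, so the whole task is to check that the codimension-one hypothesis supplies those two ingredients and converts the assumption $\overline{\Ric}>0$ into $\bar{\rm S}_{\,\rm mix}>0$.

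First I would record the structural simplifications forced by ${\rm codim}\,{\cal D}^\top=1$. Away from the singular set $\Sigma$ the complement ${\cal D}^\bot$ is spanned by a single unit field $N$, and since $g>0$ we have $\eps_N=1$. A line field is automatically integrable, so $T^\bot=0$; in particular the ``integrable normal bundle'' hypothesis of Corollary~\ref{T-w1} holds automatically, while \eqref{E-mixed-ai}, \eqref{E-cond4} and \eqref{E-cond5} are assumed. Thus every structural hypothesis of Corollary~\ref{T-w1} is already in place, and only the curvature condition remains to be translated.

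Next I would reduce the mixed scalar curvature to a normal Ricci curvature. Taking the single transversal index $i$ to be $N$ in the defining formula \eqref{eq-wal3} gives
\[
 \bar{\rm S}_{\,\rm mix}=\tfrac12\sum\nolimits_a\eps_a\big(g(\bar R_{a,N}E_a,N)+g(\bar R_{N,a}N,E_a)\big),
\]
and since $\overline{\Ric}_{N,N}=\sum\nolimits_a\eps_a\,g(\bar R_{N,E_a}N,E_a)$ the second summand equals $\tfrac12\,\overline{\Ric}_{N,N}$. Using only the antisymmetry $\bar R_{a,N}=-\bar R_{N,a}$, valid for any connection, the first summand becomes $-\tfrac12\sum\nolimits_a\eps_a\,g(\bar R_{N,a}E_a,N)$, so that $\bar{\rm S}_{\,\rm mix}$ is the average of the two Ricci-type contractions $\sum\nolimits_a\eps_a\,g(\bar R_{N,a}N,E_a)$ and $-\sum\nolimits_a\eps_a\,g(\bar R_{N,a}E_a,N)$.

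The main obstacle lies precisely in identifying these two contractions. For the Levi-Civita connection the pair symmetry of $R$ makes them equal, and for a metric (Riemann--Cartan) $\bar\nabla$ the skew symmetry $g(\bar R_{X,Y}Z,W)=-g(\bar R_{X,Y}W,Z)$ again forces $g(\bar R_{N,a}E_a,N)=-g(\bar R_{N,a}N,E_a)$, whence $\bar{\rm S}_{\,\rm mix}=\overline{\Ric}_{N,N}$; but for a general metric-affine $\bar\nabla$ that skew symmetry fails and the two contractions may differ by a non-metricity term. The natural resolution is to read $\overline{\Ric}$ as the \emph{symmetric} Ricci curvature, whose normal value is by construction the averaged expression above and hence equals $\bar{\rm S}_{\,\rm mix}$; the symmetrization built into \eqref{eq-wal3} makes this identification automatic, and one may alternatively track the discrepancy through the contorsion expansion \eqref{E-new1}. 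Either way $\overline{\Ric}>0$ yields $\bar{\rm S}_{\,\rm mix}|_{M'}>0$ along every leaf $M'$. The argument then closes immediately: on a compact leaf $(M',g')$ the continuous field $\xi=\Hb+\tfrac12(\Hb_{\cal T}-\Hb_{{\cal T}^*})^\top$ satisfies $\|\xi_{\,|M'}\|_{g'}\in L^1(M',g')$, so Corollary~\ref{T-w1} applies and excludes any compact leaf.
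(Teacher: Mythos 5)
Your proposal is correct and follows essentially the same route as the paper: the paper's proof likewise notes that for codimension one the integrability tensor of the normal line field vanishes and that $\eps_N\,\overline{\Ric}_{N,N}=\bar{\rm S}_{\,\rm mix}$, and then invokes Corollary~\ref{T-w1}. Your extra care in observing that the identification $\bar{\rm S}_{\,\rm mix}=\overline{\Ric}_{N,N}$ requires either a metric connection or a symmetrized reading of $\overline{\Ric}$ (since for a general affine connection the two Ricci-type contractions $\sum_a\eps_a\,g(\bar R_{N,a}N,E_a)$ and $-\sum_a\eps_a\,g(\bar R_{N,a}E_a,N)$ may differ) is a legitimate point that the paper itself glosses over, despite its own remark that these contractions are generally unequal.
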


\begin{proof}
For a codimension one ${\cal D}^\top$, we have $T^\top=0$ and $\eps_N\,\overline\Ric_{N,N}=\bar{\rm S}_{\,\rm mix}$,
where $N$ is a unit normal to the leaves. Hence, the claim follows from Corollary~\ref{T-w1}.
\end{proof}

\begin{theorem}
Let $(M,g,\bar\nabla)$ be a complete open (or closed) metric-affine space
endowed with complementary orthogonal harmonic foliations. Suppose that conditions \eqref{E-cond4},
$\|\xi\|_g\in L^1(M,g)$, where $\xi=\frac12\,(\Ht_{\cal T}-\Ht_{{\cal T}^*})^\bot +\frac12\,(\Hb_{\cal T}-\Hb_{{\cal T}^*})^\top$,
and
\begin{equation*}
 g( \Ht_{\cal T},\, \Hb_{{\cal T}^*}) + g( \Hb_{\cal T},\,\Ht_{{\cal T}^*}) =0
\end{equation*}
are satisfied. If $\bar{\rm S}_{\,\rm mix}\ge0$ then $M$~splits.
\end{theorem}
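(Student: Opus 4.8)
The plan is to combine the two divergence identities already at our disposal, namely the Riemannian formula \eqref{E-PW} and the metric-affine identity \eqref{E-Q1Q2-gen} of Lemma~\ref{L-QQ-first}, and then to force the resulting divergence to be nonnegative so that a vanishing-divergence argument applies. First I would translate the hypothesis of \emph{complementary orthogonal harmonic foliations} into the vanishing relations it provides: integrability yields $T^\top=0$ and $T^\bot=0$, while harmonicity yields $\Ht=0$ and $\Hb=0$. Substituting these four equalities into \eqref{E-PW} makes its left-hand side $\Div(\Hb+\Ht)=0$ and cancels every mean-curvature and integrability term, leaving the relation ${\rm S}_{\,\rm mix} = -\<h^\top,h^\top\>-\<h^\bot,h^\bot\>$.

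Next I would simplify the right-hand side of \eqref{E-Q1Q2-gen} under the three standing assumptions. Because $\Ht=\Hb=0$, the pairing $g(\Ht_{\cal T}-\Hb_{\cal T}+\Hb_{{\cal T}^*}-\Ht_{{\cal T}^*},\,\Ht-\Hb)$ disappears; the extra hypothesis $g(\Ht_{\cal T},\Hb_{{\cal T}^*})+g(\Hb_{\cal T},\Ht_{{\cal T}^*})=0$ removes the next two terms; and condition \eqref{E-cond4} annihilates the two contorsion inner products on the last line, exactly as recorded after \eqref{E-cond5}. Halving the surviving identity then gives, for $\xi=\frac12(\Ht_{\cal T}-\Ht_{{\cal T}^*})^\bot+\frac12(\Hb_{\cal T}-\Hb_{{\cal T}^*})^\top$,
\[
 \Div\xi = \bar{\rm S}_{\,\rm mix}-{\rm S}_{\,\rm mix} = \bar{\rm S}_{\,\rm mix}+\<h^\top,h^\top\>+\<h^\bot,h^\bot\>,
\]
where the last equality inserts the expression for ${\rm S}_{\,\rm mix}$ found in the first step.

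Since $g>0$, each of the three summands on the right is nonnegative (using $\bar{\rm S}_{\,\rm mix}\ge0$), hence $\Div\xi\ge0$. Together with $\|\xi\|_g\in L^1(M,g)$ this is precisely the input of Lemma~\ref{L-Div-1} in the complete open case, and of the Divergence Theorem \eqref{E-Div-Th} in the closed case; either way I conclude $\Div\xi\equiv0$. The vanishing of a sum of nonnegative terms then forces $\<h^\top,h^\top\>=\<h^\bot,h^\bot\>=0$, and since $g$ is positive definite this gives $h^\top=h^\bot=0$, so both foliations are totally geodesic. Two complementary orthogonal totally geodesic foliations produce, by the de Rham decomposition theorem, the asserted splitting of $M$.

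I expect the only delicate point to be the bookkeeping in the second step: verifying that every term on the right of \eqref{E-Q1Q2-gen} genuinely collapses once harmonicity, the prescribed curvature-pairing condition, and \eqref{E-cond4} are imposed together, and that the factor $\frac12$ built into $\xi$ matches the coefficient $2$ in front of $\bar{\rm S}_{\,\rm mix}-{\rm S}_{\,\rm mix}$. No analytic obstacle arises beyond this, because the sign condition $\Div\xi\ge0$ with the $L^1$-bound is exactly what Lemma~\ref{L-Div-1} requires.
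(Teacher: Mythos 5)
Your proposal is correct and follows essentially the same route as the paper: the paper's own proof invokes \eqref{E-div-IF1}, which is precisely the sum of \eqref{E-PW} and one half of \eqref{E-Q1Q2-gen}, and then specializes using harmonicity, integrability, \eqref{E-cond4} and the pairing condition to reach $\Div\xi=\bar{\rm S}_{\,\rm mix}+\<h^\top,h^\top\>+\<h^\bot,h^\bot\>$ before applying Lemma~\ref{L-Div-1} and the de Rham decomposition theorem. Your assembly of the same identity directly from \eqref{E-PW} and Lemma~\ref{L-QQ-first}, including the factor-$\frac12$ bookkeeping, matches the paper's argument in all essentials.
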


\begin{proof}
Under conditions, from \eqref{E-div-IF1} we~get
\begin{equation*}
 \Div \xi = \bar{\rm S}_{\,\rm mix} +\<h^\top,h^\top\> +\<h^\bot,h^\bot\> .
\end{equation*}
By Lemma~\ref{L-Div-1} and since $\bar{\rm S}_{\,\rm mix}\ge0$, we get $\Div\xi=0$.
Thus, $h^\top=0=h^\bot$. Hence, by de Rham decomposition theorem, $(M,g)$ splits.
\end{proof}

\subsection{Umbilical distributions}
%\label{subsec:umb}

\begin{theorem}\label{T-w2}
Let $(M,g,\bar\nabla)$ be a metric-affine space endowed with
two complementary orthogo\-nal distributions $({\cal D}^\top,{\cal D}^\bot)$
with conditions \eqref{E-mixed-ai}, \eqref{E-cond4} and \eqref{E-cond5}.
Then~${\cal D}^\top$ has no complete open umbilical leaves $(M',g')$ with the properties
$\bar{\rm S}_{\,{\rm mix}\,|M'}<0$ and
 $\|\xi_{\,|M'}\|_{g'}\in L^1(M',g')$, where $\xi={H}^\bot +\frac12\,( \Hb_{\cal T} - \Hb_{{\cal T}^*})^\top$.
In particular, there are no compact umbilical leaves $(M',g')$ with $\bar{\rm S}_{\,{\rm mix}\,|M'}<0$.
\end{theorem}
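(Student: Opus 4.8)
The plan is to argue by contradiction in parallel with Corollary~\ref{T-w1}, the difference being that umbilicity, rather than integrability of the normal bundle, is what disposes of the leaf's own second fundamental form. Suppose ${\cal D}^\top$ has a complete open umbilical leaf $(M',g')$ with $\bar{\rm S}_{\,\rm mix}<0$ and $\|\xi_{|M'}\|_{g'}\in L^1(M',g')$. Since $\xi$ is tangent to $M'$, I will compute the leaf-intrinsic divergence $\Div^\top\xi$ from \eqref{E-div-IF2}, simplify it using the three hypotheses together with umbilicity, and then invoke Lemma~\ref{L-Div-1} on the complete open leaf.

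The reduction is the heart of the matter. First, \eqref{E-cond4} gives $g({\cal T}_{E_a}E_a,{\cal E}_i)=g({\cal T}^*_{E_a}{\cal E}_i,E_a)=0$ and its analogues, so the mean-curvature-type vectors split as $\Ht_{\cal T},\Ht_{{\cal T}^*}\in{\cal D}^\top$ and $\Hb_{\cal T},\Hb_{{\cal T}^*}\in{\cal D}^\bot$. Three things follow: $\xi=\Hb$, since $(\Hb_{\cal T}-\Hb_{{\cal T}^*})^\top=0$; the cross terms $g(\Ht_{\cal T},\Hb_{{\cal T}^*})$, $g(\Hb_{\cal T},\Ht_{{\cal T}^*})$ and the term $\<{\cal T}^*,\widehat{\cal T}\>_{\,|\,V}$ all vanish; and $(\Ht_{{\cal T}^*}-\Ht_{\cal T})^\bot=0$, so that \eqref{E-mixed-ai} reduces to $\Ht=0$. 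The single remaining mixed term is killed by \eqref{E-cond5}. Umbilicity now enters: along $M'$ one has $h^\top=\frac1n\Ht\,g^\top=0$, hence $\<h^\top,h^\top\>=0$. Substituting into \eqref{E-div-IF2}, whose $\Div^\bot$ summand is already annihilated by \eqref{E-mixed-ai}, leaves the clean identity
\[
 \Div^\top\Hb=\bar{\rm S}_{\,\rm mix}+\<h^\bot,h^\bot\>-\<T^\bot,T^\bot\>
\]
along $M'$, the counterpart of \eqref{E-th4} in which the obstructing $\<h^\top,h^\top\>$ has been removed by umbilicity at the cost of retaining $\<T^\bot,T^\bot\>$.

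It remains to run Lemma~\ref{L-Div-1} on $(M',g')$, where $\|\Hb\|_{g'}=\|\xi_{|M'}\|_{g'}\in L^1$. If the right-hand side above is nonpositive, then $\Div^\top(-\Hb)\ge0$, the lemma forces $\Div^\top\Hb\equiv0$, and hence $\bar{\rm S}_{\,\rm mix}\equiv0$ on $M'$, contradicting $\bar{\rm S}_{\,\rm mix}<0$; the compact case is then immediate from the Divergence Theorem, since the integral of a strictly negative function cannot vanish. I expect the sign of the surviving normal contribution $\<h^\bot,h^\bot\>-\<T^\bot,T^\bot\>$ to be the main obstacle. Indeed, umbilicity plays, in the regime $\bar{\rm S}_{\,\rm mix}<0$, the role dual to that of the integrable normal bundle of Corollary~\ref{T-w1} (which removes $\<T^\bot,T^\bot\>$ so that $\bar{\rm S}_{\,\rm mix}>0$ dominates), but it does not by itself control $\<h^\bot,h^\bot\>$. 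Establishing the nonpositivity of $\<h^\bot,h^\bot\>-\<T^\bot,T^\bot\>$ along the umbilical leaf is therefore the step requiring the most care; the model instance in which it is transparent is ${\cal D}^\bot$ totally geodesic, where $\<h^\bot,h^\bot\>=0$ and $\Div^\top\Hb\le\bar{\rm S}_{\,\rm mix}<0$ closes the argument at once.
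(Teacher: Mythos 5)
Your reduction of the hypotheses is correct and follows the same route as the paper: under \eqref{E-cond4} one has $(\Ht_{\cal T})^\bot=(\Ht_{{\cal T}^*})^\bot=0$ and $(\Hb_{\cal T})^\top=(\Hb_{{\cal T}^*})^\top=0$, so \eqref{E-mixed-ai} forces $\Ht=0$, $\xi$ collapses to $\Hb$, umbilicity then gives $h^\top=0$, and the contorsion terms of \eqref{E-div-IF2} are killed by \eqref{E-cond4}, \eqref{E-cond5} and $\Ht=0$. The identity you arrive at,
\[
 \Div^\top \Hb \;=\; \bar{\rm S}_{\,\rm mix}+\<h^\bot,h^\bot\>-\<T^\bot,T^\bot\>,
\]
is the correct consequence of \eqref{E-div-IF2} under the stated hypotheses. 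But, as you yourself concede, this does not prove the theorem: $\<h^\bot,h^\bot\>-\<T^\bot,T^\bot\>$ has no definite sign, so Lemma~\ref{L-Div-1} cannot be applied to $-\Hb$, and your argument stops exactly where the difficulty begins. As a proof the proposal is incomplete.

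The instructive point is that the obstacle you isolate is not an artifact of your route --- it sits in the paper's own proof. The paper asserts (formula \eqref{E-umb-T6}) that $\Div^\top\xi=\bar{\rm S}_{\,\rm mix}-\<T^\bot,T^\bot\>-\frac{p-1}{p}\,g(\Hb,\Hb)-\frac{n-1}{n}\,g(\Ht,\Ht)$, whose non-curvature terms are all nonpositive, and then invokes Lemma~\ref{L-Div-1}. That formula does not follow from \eqref{E-div-IF2}: the right-hand side of \eqref{E-div-IF2} carries $+\<h^\bot,h^\bot\>\ge0$ and no $-g(\Hb,\Hb)$, since the terms $-g(\Hb,\Hb)$ and $-g(\Ht,\Ht)$ of \eqref{E-PW} are precisely what is absorbed in passing from $\Div$ to $\Div^\top+\Div^\bot$. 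To produce $-\frac{p-1}{p}\,g(\Hb,\Hb)$ one would need both that ${\cal D}^\bot$ is umbilical (so that $\<h^\bot,h^\bot\>=\frac1p\,g(\Hb,\Hb)$), which the theorem does not assume, and an extra $-g(\Hb,\Hb)$, which belongs to the full divergence $\Div\Hb$ rather than to the leafwise divergence $\Div^\top\Hb$ that the argument on $(M',g')$ requires. The pattern of \eqref{E-umb-T6} appears to have been carried over from \eqref{E-umb-C5}, where it is legitimate because there the full $\Div$ is used and both distributions are umbilical. So your computation is right, your proof is unfinished, and closing it honestly requires either an additional hypothesis controlling $\<h^\bot,h^\bot\>-\<T^\bot,T^\bot\>$ (for instance ${\cal D}^\bot$ totally geodesic, as you suggest) or a genuinely different argument.
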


\begin{proof}
Let $(M',g')$ be a complete open umbilical leaf obeying the conditions.
From \eqref{E-div-IF2} we~get
\begin{equation}\label{E-umb-T6}
 \Div^\top \xi = \bar{\rm S}_{\,\rm mix}
 -\<T^\bot, T^\bot\>
 -\frac{p-1}{p}\,g(\Hb,\Hb) -\frac{n-1}{n}\,g(\Ht,\Ht)
\end{equation}
on $M'$.
Thus, applying Lemma~\ref{L-Div-1} to the leaf, and since $\bar{\rm S}_{\,{\rm mix}\,|M'}<0$, we get $\Div^\top\xi=0$.
By~the~above, $\Ht=0=\Hb$, $T^\bot=0$ and $\bar{\rm S}_{\,\rm mix}=0$ -- a contradiction.
\end{proof}

\begin{corollary}
A codimension one distribution ${\cal D}^\top$ on a metric-affine space $(M,g,\bar\nabla)$ with the Ricci curvature $\overline\Ric<0$
and conditions \eqref{E-mixed-ai}, \eqref{E-cond4} and \eqref{E-cond5} has no compact umbilical leaves.
\end{corollary}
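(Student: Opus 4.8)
The plan is to reduce the statement to Theorem~\ref{T-w2}, exactly as the preceding (harmonic) codimension-one corollary reduces to Corollary~\ref{T-w1}. First I would record the two simplifications forced by codimension one. Here ${\cal D}^\bot$ is spanned by a single unit normal field $N$, so $p=1$; being one-dimensional, ${\cal D}^\bot$ is automatically integrable, whence $T^\bot=0$. Consequently, in the umbilical divergence identity \eqref{E-umb-T6} both the term $\<T^\bot,T^\bot\>$ and the term $\frac{p-1}{p}\,g(\Hb,\Hb)$ vanish, leaving $\Div^\top\xi=\bar{\rm S}_{\,\rm mix}-\frac{n-1}{n}\,g(\Ht,\Ht)$. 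Moreover, as already used in the harmonic codimension-one corollary, for $p=1$ one has $\eps_N\,\overline\Ric_{N,N}=\bar{\rm S}_{\,\rm mix}$; since the sequel assumes $g>0$ we have $\eps_N=1$, so $\bar{\rm S}_{\,\rm mix}=\overline\Ric_{N,N}$.

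With these in hand, the curvature hypothesis translates directly: $\overline\Ric<0$ means $\overline\Ric_{N,N}<0$ pointwise, and by the identity just recalled this is precisely $\bar{\rm S}_{\,\rm mix}<0$, hence $\bar{\rm S}_{\,{\rm mix}\,|M'}<0$ along every leaf $(M',g')$. I would then invoke Theorem~\ref{T-w2}: conditions \eqref{E-mixed-ai}, \eqref{E-cond4} and \eqref{E-cond5} are assumed, and on a compact leaf the summability requirement $\|\xi_{\,|M'}\|_{g'}\in L^1(M',g')$ for $\xi={H}^\bot+\frac12\,(\Hb_{\cal T}-\Hb_{{\cal T}^*})^\top$ holds automatically. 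Since that theorem forbids compact umbilical leaves with $\bar{\rm S}_{\,{\rm mix}\,|M'}<0$, no such leaf can exist, which is the claim. Alternatively, the contradiction is immediate from the reduced identity above: by the divergence machinery of Theorem~\ref{T-w2}, $\Div^\top\xi$ integrates to zero over a compact umbilical leaf, whereas its expression $\bar{\rm S}_{\,\rm mix}-\frac{n-1}{n}\,g(\Ht,\Ht)$ is pointwise negative when $\bar{\rm S}_{\,\rm mix}<0$.

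The only genuinely delicate point is the codimension-one identity $\eps_N\,\overline\Ric_{N,N}=\bar{\rm S}_{\,\rm mix}$, which must be read off from Definition~\eqref{eq-wal3} with care: for the asymmetric connection $\bar\nabla$ the curvature lacks the pair symmetry $g(\bar R_{X,Y}Z,W)=g(\bar R_{Z,W}X,Y)$, so the summand $g(\bar R_{N,E_a}N,E_a)$ need not coincide with $g(\bar R_{E_a,N}E_a,N)$, and one must check that the symmetrizing factor $1/2$ in \eqref{eq-wal3} together with the two ordered terms reproduces exactly $\overline\Ric_{N,N}$. This is the same verification already carried out for the harmonic corollary, so I would cite it rather than repeat it; everything else is the routine bookkeeping of setting $p=1$ and $T^\bot=0$ in \eqref{E-umb-T6}.
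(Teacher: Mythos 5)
Your reduction to Theorem~\ref{T-w2} — using that for $p=1$ one has $T^\bot=0$ and $\frac{p-1}{p}\,g(\Hb,\Hb)=0$, that $\eps_N\,\overline\Ric_{N,N}=\bar{\rm S}_{\,\rm mix}$, and that the $L^1$ condition is automatic on a compact leaf — is exactly the argument the paper intends (it leaves this corollary without an explicit proof, but the preceding codimension-one harmonic corollary is proved by the identical reduction to Corollary~\ref{T-w1}). Your proposal is correct and takes essentially the same approach.
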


A submersion $f:(M,g)\to (\tilde M,\tilde g)$ is \textit{conformal} if $f_*$ restricted to $(\ker f_*)^\bot$
is conformal map,~\cite{step1}.

\begin{theorem}[For ${\cal T}=0$, see Corollary~5 in \cite{step1}]
Let $(M,g,\bar\nabla)$ be a complete open metric-affine space,
$f:(M,g)\to (\tilde M,\tilde g)$ a conformal submersion with umbilical fibres
and conditions \eqref{E-mixed-ai}, \eqref{E-cond4} and~\eqref{E-cond5}.
If $\bar{\rm S}_{\,{\rm mix}}\le0$
and $\|\xi_{|M'}\|_{g'}{\in} L^1(M',g')$, where $\xi={H}^\bot +\frac12(\Hb_{\cal T} - \Hb_{{\cal T}^*})^\top$,
on any fibre $(M'g')$ then $(\ker f_*)^\bot$ is integrable and $M$~splits.
\end{theorem}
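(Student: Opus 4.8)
The plan is to treat this statement as the splitting counterpart of Theorem~\ref{T-w2}, the new ingredient being that a conformal submersion makes \emph{both} complementary distributions umbilical. First I would fix the identification ${\cal D}^\top=\ker f_*$ (vertical, tangent to the fibres) and ${\cal D}^\bot=(\ker f_*)^\bot$ (horizontal). The fibres are the leaves of ${\cal D}^\top$, so this distribution is integrable ($T^\top=0$) and, by the hypothesis of umbilical fibres, umbilical. The key structural input is that a conformal submersion has umbilical horizontal distribution, i.e. ${\cal D}^\bot$ is umbilical as well (see~\cite{step1}); this is precisely what upgrades the one-sided umbilicity of Theorem~\ref{T-w2} to two-sided umbilicity.

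With both ${\cal D}^\top$ and ${\cal D}^\bot$ umbilical and $T^\top=0$, I would invoke the computation behind Theorem~\ref{T-w2}: under conditions \eqref{E-mixed-ai}, \eqref{E-cond4} and \eqref{E-cond5}, formula \eqref{E-div-IF2} collapses to \eqref{E-umb-T6}, namely
\begin{equation*}
 \Div^\top \xi = \bar{\rm S}_{\,\rm mix} -\<T^\bot, T^\bot\> -\frac{p-1}{p}\,g(\Hb,\Hb) -\frac{n-1}{n}\,g(\Ht,\Ht)
\end{equation*}
along each fibre $(M',g')$, where $\xi={H}^\bot +\frac12(\Hb_{\cal T}-\Hb_{{\cal T}^*})^\top$. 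Here \eqref{E-mixed-ai} annihilates the $\Div^\bot$-part, \eqref{E-cond4} removes the ${\cal T}$-inner-product terms over $V$, and \eqref{E-cond5} removes the remaining mean-curvature-type cross terms, while umbilicity turns $\<h^\top,h^\top\>-g(\Ht,\Ht)$ and $\<h^\bot,h^\bot\>-g(\Hb,\Hb)$ into the two non-positive curvature-of-umbilicity terms. I would also note that $\xi$ is genuinely tangent to $M'$, since $H^\bot\in{\cal D}^\top$, so $\Div^\top\xi$ coincides with the intrinsic divergence on the leaf and the completeness arguments below apply to it directly.

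Since $g>0$ and $\bar{\rm S}_{\,\rm mix}\le0$, the right-hand side is $\le0$, so $\Div^\top(-\xi)\ge0$ on the complete open fibre $M'$. Applying Lemma~\ref{L-Div-1} to $-\xi$, using $\|\xi_{\,|M'}\|_{g'}\in L^1(M',g')$, I would conclude $\Div^\top\xi\equiv0$, whence the whole right-hand side vanishes identically. As each of the four summands is non-positive, each must vanish separately: $\bar{\rm S}_{\,\rm mix}=0$, $T^\bot=0$, $\Hb=0$ and $\Ht=0$. The equality $T^\bot=0$ gives integrability of $(\ker f_*)^\bot$; combined with umbilicity, $\Ht=0$ forces $h^\top=\frac1n\Ht\,g^\top=0$ and $\Hb=0$ forces $h^\bot=\frac1p\Hb\,g^\bot=0$, so both distributions are integrable and totally geodesic, hence parallel. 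By the de Rham decomposition theorem, $(M,g)$ splits.

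The main obstacle I anticipate is two-fold and essentially bookkeeping rather than conceptual: first, justifying that a conformal submersion yields an umbilical horizontal distribution, so that \eqref{E-umb-T6} is available for ${\cal D}^\bot$ as well as for the fibres; and second, verifying the hypotheses of Lemma~\ref{L-Div-1} on the fibres, i.e. that each fibre is a complete Riemannian manifold (which holds since fibres are closed submanifolds of the complete $M$) and that $\xi$ satisfies the stated $L^1$-bound. Once these are in place, the sign analysis forcing the four non-positive terms to vanish, and the appeal to the de Rham theorem, are immediate.
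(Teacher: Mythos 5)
Your proposal follows the paper's own proof essentially verbatim: set ${\cal D}^\top=\ker f_*$, reduce \eqref{E-div-IF2} to \eqref{E-umb-T6} using \eqref{E-mixed-ai}, \eqref{E-cond4}, \eqref{E-cond5} and the two-sided umbilicity supplied by the conformal submersion, apply Lemma~\ref{L-Div-1} on each complete fibre so that every non-positive summand (hence $\Ht$, $\Hb$ and $T^\bot$) vanishes, and conclude with the de Rham decomposition theorem. The only difference is that you make explicit several details the paper leaves implicit (why the horizontal distribution is umbilical, applying the lemma to $-\xi$ to match its sign convention, completeness of the fibres), which is a welcome elaboration rather than a different route.
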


\begin{proof} Set ${\cal D}^\top=\ker f$. Under conditions, we have \eqref{E-umb-T6}.
Applying Lemma~\ref{L-Div-1} to every fibre (a complete open manifold), and since $\bar{\rm S}_{\,\rm mix}\le0$,
we get $\Div\xi=0$. The above yields vanishing of $\Ht$, $\Hb$ and $T^\bot$. By de Rham decomposition theorem, $(M,g)$ splits.
\end{proof}

\begin{theorem}[For ${\cal T}=0$, see Theorem~4 in \cite{step1}]
\label{C-Step3} Let $(M,g,\bar\nabla)$ be a complete open (or closed) metric-affine space
endowed with complementary orthogonal umbilical distributions
${\cal D}^\top$ and ${\cal D}^\bot$ defined on the complement to the ``set of singularities" $\Sigma$.
If conditions \eqref{E-cond4},
\begin{equation}\label{E-HHT-b}
 \Ht_{\cal T} = 0 = \Hb_{\cal T}, \quad \Ht_{{\cal T}^*} = 0 = \Hb_{{\cal T}^*}
\end{equation}
and $\|\xi\|_g\in L^1(M,g)$, where $\xi=\Hb +\Ht$, are satisfies and $\bar{\rm S}_{\,\rm mix}\le0$ then $M$~splits.
\end{theorem}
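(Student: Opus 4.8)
The plan is to derive an integral formula (the ``divergence $\ge 0$ forces $\Div\equiv 0$'' mechanism of Lemma~\ref{L-Div-1}) and then invoke the de Rham decomposition theorem, exactly as in the preceding umbilical theorems. First I would set $\xi = \Hb + \Ht$ and compute $\Div\xi$ on $M\setminus\Sigma$ starting from the general divergence identity \eqref{E-div-IF1}. The hypotheses are tailored to collapse that identity drastically: condition \eqref{E-cond4} (${\cal T}_XY={\cal T}_YX={\cal T}^*_XY={\cal T}^*_YX=0$ for $X\in{\cal D}^\top$, $Y\in{\cal D}^\bot$) kills the last two lines of \eqref{E-div-IF1}, namely the $\<{\cal T}-{\cal T}^*+\widehat{\cal T}-\widehat{{\cal T}^*},\,A^\bot-T^{\bot\sharp}+A^\top-T^{\top\sharp}\>$ and $\<{\cal T}^*,\widehat{\cal T}\>_{|V}$ terms, which are built from the mixed components of the contorsion. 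Meanwhile condition \eqref{E-HHT-b} ($\Ht_{\cal T}=\Hb_{\cal T}=\Ht_{{\cal T}^*}=\Hb_{{\cal T}^*}=0$) annihilates every remaining $g(\cdot_{\cal T},\cdot_{{\cal T}^*})$ term and also makes the correction field $\tfrac12(\Ht_{\cal T}-\Ht_{{\cal T}^*})^\bot+\tfrac12(\Hb_{\cal T}-\Hb_{{\cal T}^*})^\top$ vanish, so that $\xi=\Hb+\Ht$ is precisely the field whose divergence \eqref{E-div-IF1} computes.

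Next I would impose the umbilicity of both distributions. Writing $h^\top=\tfrac1n\,\Ht\,g^\top$ and $h^\bot=\tfrac1p\,\Hb\,g^\bot$ gives $\<h^\top,h^\top\>=\tfrac1n\,g(\Ht,\Ht)$ and $\<h^\bot,h^\bot\>=\tfrac1p\,g(\Hb,\Hb)$ (assuming $g>0$, as stated at the start of the section). Substituting into the surviving part of \eqref{E-div-IF1} yields
\begin{equation*}
 \Div\xi = \bar{\rm S}_{\,\rm mix} -\<T^\top,T^\top\> -\<T^\bot,T^\bot\>
 -\frac{n-1}{n}\,g(\Ht,\Ht) -\frac{p-1}{p}\,g(\Hb,\Hb),
\end{equation*}
the two-distribution analogue of \eqref{E-umb-T6}. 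This is the main computational step, but it is routine given the already-proved identities; no new estimate is needed.

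Now I would apply the analytic mechanism. Since $g>0$ and both $(n-1)/n\ge0$, $(p-1)/p\ge0$, and $\<T^\top,T^\top\>,\<T^\bot,T^\bot\>\ge0$, the hypothesis $\bar{\rm S}_{\,\rm mix}\le0$ forces $\Div\xi\le0$. Applying Lemma~\ref{L-Div-1} to $-\xi$ (a complete open $M$ with $\|\xi\|_g\in L^1$; the closed case follows from \eqref{E-Div-Th} together with Lemma~\ref{L-LuW-2} to absorb $\Sigma$) gives $\Div\xi\equiv0$, whence each nonpositive summand vanishes: $\bar{\rm S}_{\,\rm mix}=0$, $T^\top=T^\bot=0$, and $\Ht=\Hb=0$. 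Thus both distributions are integrable (the integrability tensors vanish) and totally geodesic (umbilical with zero mean curvature forces $h^\top=h^\bot=0$). The de Rham decomposition theorem then gives the splitting of $(M,g)$.

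The step I expect to demand the most care is the singular-set bookkeeping: I must confirm that $\xi=\Hb+\Ht$ extends across $\Sigma$ in the $L^1$ sense and that Lemma~\ref{L-Div-1} (stated for a genuinely complete open manifold) applies on $M\setminus\Sigma$. Here the condition ${\rm codim}\,\Sigma\ge2$ is what guarantees that the complement is still ``large enough'' for the Yau–Stokes argument, and I would lean on Lemma~\ref{L-LuW-2} in the closed case and on the $L^1$ hypothesis in the open case. A secondary subtlety is the sign convention ensuring that ``$\bar{\rm S}_{\,\rm mix}\le0\Rightarrow\Div\xi\le0$'' matches the direction required by Lemma~\ref{L-Div-1}, which is why the argument is applied to $-\xi$.
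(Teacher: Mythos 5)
Your proposal is correct and follows essentially the same route as the paper: specializing \eqref{E-div-IF1} via \eqref{E-cond4}, \eqref{E-HHT-b} and umbilicity to obtain $\Div\xi = \bar{\rm S}_{\,\rm mix} -\<T^\top,T^\top\> -\<T^\bot,T^\bot\> -\frac{p-1}{p}\,g(\Hb,\Hb) -\frac{n-1}{n}\,g(\Ht,\Ht)$, then applying Lemma~\ref{L-Div-1} to force $\Div\xi\equiv0$ and hence the vanishing of $T^\top,T^\bot,\Ht,\Hb$, so that de Rham's theorem gives the splitting. Your additional remarks on the sign convention (applying the lemma to $-\xi$) and on the treatment of $\Sigma$ are sensible refinements of details the paper leaves implicit.
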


\begin{proof}
 Under conditions, from \eqref{E-div-IF1} we~get
\begin{equation}\label{E-umb-C5}
 \Div \xi = \bar{\rm S}_{\,\rm mix} -\<T^\top,T^\top\>
 -\<T^\bot,T^\bot\> -\frac{p-1}{p}\,g(\Hb,\Hb) -\frac{n-1}{n}\,g(\Ht,\Ht).
\end{equation}
From \eqref{E-umb-C5} and Lemma~\ref{L-Div-1} and since $\bar{\rm S}_{\,\rm mix}\le0$, we get $\Div\xi=0$.
The above yields vanishing of $T^\top,T^\bot,\Ht,\Hb$. Hence, by de Rham decomposition theorem, $(M,g)$ splits.
\end{proof}

Umbilical integrable distributions appear on double-twisted products, see \cite{pr}.

\begin{definition}\rm
A \textit{doubly-twisted product} $B\times_{(v,u)} F$ of metric-affine spaces $(B,g_B,{\cal T}_B)$
and $(F, g_F,{\cal T}_F)$ is a manifold $M=B\times F$ with metric $g = g^\top + g^\perp$ and contorsion tensor
${\cal T} = {\cal T}^\top + {\cal T}^\bot$,~where
\begin{eqnarray*}
 g^\top(X,Y) \eq v^2 g_B(X^\top,Y^\top),\quad g^\bot(X,Y)=u^2 g_F(X^\bot,Y^\bot),\\
 ({\cal T}^\top)_XY \eq u^2({\cal T}_B)_{X^\top}Y^\top,\quad ({\cal T}^\bot)_XY=v^2({\cal T}_F)_{X^\bot}Y^\bot,
\end{eqnarray*}
and the warping functions $u,v\in C^\infty(M)$ are positive.
Indeed, ${\cal T}^* = {\cal T}^{*\top} + {\cal T}^{*\bot}$, where
\[
 ({\cal T}^{*\top})_XY=u^2({\cal T}^*_B)_{X^\top}Y^\top,\quad
 ({\cal T}^{*\bot})_XY=v^2({\cal T}^*_F)_{X^\bot}Y^\bot.
\]
\end{definition}

Let ${\cal D}^\top$ be tangent to the \textit{fibers} $\{x\}\times F$ and ${\cal D}^\bot$
tangent to the \textit{leaves} $B\times\{y\}$. The second fundamental forms
and the mean curvature vectors of $B\times_{(v,u)} F$ are given by, see \cite{pr},
\begin{eqnarray*}
 && h^\bot = -\nabla^\top(\log u)\,g^\bot,\quad h^\top=-\nabla^\bot(\log v)\,g^\top,\\
 && \Hb=-n\,\nabla^\top(\log u),\quad \Ht=-p\,\nabla^\bot(\log v).
\end{eqnarray*}
Thus, the {leaves} and the {fibers} of $B\times_{(v,u)} F$ are umbilical w.r.t. $\bar\nabla$ and $\nabla$.
 Conditions \eqref{E-cond4} are obviously satisfied for $B\times_{(v,u)} F$.
Next corollaries extend results in~\cite{step2}.

\begin{corollary}[of Theorem~\ref{T-w2}] Let \eqref{E-mixed-ai}, \eqref{E-cond5} and
 $\|\xi_{\,|M'}\|_{g'}\in L^1(M',g')$, $\xi={H}^\bot +\frac12\,( \Hb_{\cal T} - \Hb_{{\cal T}^*})^\top$,
 are satisfied  along the fibres of $B\times_{(v,u)} F$, where $(F,g_F)$ is complete open (or closed).
 If $\bar{\rm S}_{\,\rm mix}\le0$ then $M$ is the direct product.
\end{corollary}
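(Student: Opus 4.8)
This is a corollary of Theorem~\ref{T-w2}, so the plan is to rerun the divergence identity behind that theorem on each fibre $M'=\{x\}\times F$, but to replace the strict-sign contradiction by a rigidity argument exploiting $\bar{\rm S}_{\,\rm mix}\le0$. First I would verify that the hypotheses of Theorem~\ref{T-w2} hold automatically for a doubly-twisted product with ${\cal D}^\top$ tangent to the fibres: conditions \eqref{E-cond4} hold by the product form of ${\cal T},{\cal T}^*$, the fibres are umbilical, and the leaves $B\times\{y\}$ are the integral manifolds of ${\cal D}^\bot$, so $T^\bot=0$ (and, since ${\cal D}^\top$ has the fibres as leaves, $T^\top=0$ as well). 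Hence \eqref{E-umb-T6} is valid along every fibre and, dropping the vanishing $\<T^\bot,T^\bot\>$ term, reduces to
\[
 \Div^\top \xi = \bar{\rm S}_{\,\rm mix} -\frac{p-1}{p}\,g(\Hb,\Hb) -\frac{n-1}{n}\,g(\Ht,\Ht),
\]
with $\xi=\Hb+\frac12(\Hb_{\cal T}-\Hb_{{\cal T}^*})^\top$ as in the statement.

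Since $g>0$, the last two terms are $\le0$, and $\bar{\rm S}_{\,\rm mix}\le0$ gives $\Div^\top\xi\le0$ on each fibre. Each fibre is complete (as $(F,g_F)$ is complete open, or closed) and by hypothesis $\|\xi_{\,|M'}\|_{g'}\in L^1(M',g')$, so I would apply Lemma~\ref{L-Div-1} to $-\xi$ in the complete open case (and the Divergence Theorem \eqref{E-Div-Th} in the closed case) to conclude $\Div^\top\xi\equiv0$ along $M'$. As the three summands on the right share a common sign, each vanishes separately; in particular $g(\Hb,\Hb)=g(\Ht,\Ht)=0$, whence $\Hb=0=\Ht$ (and incidentally $\bar{\rm S}_{\,\rm mix}=0$).

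It remains to upgrade this to a direct product. By the umbilical relations $h^\top=\frac1n\Ht\,g^\top$ and $h^\bot=\frac1p\Hb\,g^\bot$, the equalities $\Ht=0=\Hb$ force $h^\top=0=h^\bot$, so both foliations are totally geodesic. Reading the doubly-twisted mean-curvature formulas backwards, $\Hb=-n\,\nabla^\top(\log u)=0$ forces $u=u(x)$ (constant along the fibres) and $\Ht=-p\,\nabla^\bot(\log v)=0$ forces $v=v(y)$; consequently $v^2 g_B$ and $u^2 g_F$ descend to genuine metrics $\tilde g_B$ on $B$ and $\tilde g_F$ on $F$, and the doubly-twisted metric collapses to the product $\tilde g_B\oplus\tilde g_F$. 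I expect this last step to be the only real obstacle: one must pass from the local de Rham splitting (two complementary, integrable, totally geodesic distributions) to the \emph{global} direct product. The product topology $M=B\times F$ together with the just-obtained factorization of the warping functions makes the splitting global, as in~\cite{pr}.
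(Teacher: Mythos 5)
Your analytic core coincides with the paper's: you specialize \eqref{E-umb-T6} to the fibres (noting $T^\bot=0$ and that \eqref{E-cond4} holds automatically for the doubly-twisted contorsion), observe that $\bar{\rm S}_{\,\rm mix}\le 0$ gives $\Div^\top\xi$ a single sign, and apply Lemma~\ref{L-Div-1} fibrewise to force $\Div^\top\xi\equiv0$, hence $\Ht=0=\Hb$ and $\bar{\rm S}_{\,\rm mix}=0$. (Your remark that the lemma must be applied to $-\xi$, since $\Div^\top\xi\le0$ rather than $\ge0$, is a sign issue the paper glosses over; that part of your write-up is actually more careful.)

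The gap is in the endgame, which is also where you depart from the paper. From $\nabla^\top(\log u)=0$ and $\nabla^\bot(\log v)=0$ you conclude $u=u(x)$, $v=v(y)$, and then assert that $u^2g_F$ and $v^2g_B$ ``descend to genuine metrics'' on $F$ and on $B$. With exactly those dependencies they do not: $u^2g_F$ with $u=u(x)$ is a different rescaling of $g_F$ on each fibre $\{x\}\times F$, and $v^2g_B$ with $v=v(y)$ varies from leaf to leaf. At that stage you have each factor warped by a function of the \emph{other} factor's coordinate --- a doubly-warped product, not yet a direct product --- so the descent claim is a non sequitur. The paper closes this step with the curvature information you discard: $\Div\xi=0$ also forces $\bar{\rm S}_{\,\rm mix}=0$, hence ${\rm S}_{\,\rm mix}=0$, and this (combined with $\nabla^\top u=0=\nabla^\bot v$) is what lets it conclude that $u$ and $v$ are \emph{constant}, after which de Rham yields the direct product with factors $(B,c_1\,g_B)$ and $(F,c_2\,g_F)$. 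Your alternative exit --- umbilicity plus $\Ht=0=\Hb$ gives $h^\top=0=h^\bot$, so both canonical foliations are totally geodesic and de Rham applies --- is a legitimate route, but to identify the local factors as rescalings of $(B,g_B)$ and $(F,g_F)$ you would still have to argue that the local product structure forces $u$ and $v$ to be constant (e.g., a function of $y$ alone that is also constant along the fibres is constant on connected $M$); as written, your proof never extracts this and stops one step short of the stated conclusion.
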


\begin{proof} Under conditions, from \eqref{E-umb-T6} we~get
\begin{equation*}
 \Div^\top \xi = \bar{\rm S}_{\,\rm mix} -\frac{p-1}{p}\,g(\Hb,\Hb) -\frac{n-1}{n}\,g(\Ht,\Ht) .
\end{equation*}
Applying Lemma~\ref{L-Div-1} to each fibre (a complete manifold), and since $\bar{\rm S}_{\,\rm mix}\le0$,
we get $\Div\xi=0$. Hence, $\bar{\rm S}_{\,\rm mix}=0$ and $\Ht=0=\Hb$, i.e., $\nabla^\top u=0=\nabla^\bot v$.
By the above, ${\rm S}_{\,\rm mix}=0$; thus, $u$ and $v$ are constant.
 By de Rham decomposition theorem, $(M,g)$ splits with the factors $(B,c_1\cdot g_B)$ and $(F,c_2\cdot g_F)$ for some $c_1,c_2>0$.
\end{proof}

\begin{corollary}[of Theorem~\ref{C-Step3}]
 Let $M=B\times_{(v,u)} F$ be complete open (or closed) and conditions \eqref{E-cond5}, \eqref{E-HHT-b} and $\|\xi\|_{g}\in L^1(M,g)$,
 $\xi=\Hb + \Ht$ are satisfied.
 If $\bar{\rm S}_{\,\rm mix}\le0$ then $M$ is the direct product.
\end{corollary}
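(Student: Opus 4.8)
The plan is to specialize the proof of Theorem~\ref{C-Step3} to the doubly-twisted product $M=B\times_{(v,u)}F$ and then to strengthen the conclusion from ``$M$ splits'' to ``$M$ is the direct product'' by showing the warping functions are constant. First I would verify that all hypotheses of Theorem~\ref{C-Step3} hold here: by the displayed second-fundamental-form formulas the leaves $B\times\{y\}$ and the fibres $\{x\}\times F$ are umbilical, these canonical foliations are integrable (so $T^\top=0=T^\bot$), condition \eqref{E-cond4} holds automatically for $B\times_{(v,u)}F$, and \eqref{E-cond5} is vacuous under \eqref{E-HHT-b} since then $\Ht_{\cal T}-\Ht_{{\cal T}^*}=0$. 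Hence, exactly as in the proof of Theorem~\ref{C-Step3}, the divergence identity \eqref{E-div-IF1} collapses to the umbilical form \eqref{E-umb-C5}; discarding $\<T^\top,T^\top\>$ and $\<T^\bot,T^\bot\>$ by integrability leaves
\[
 \Div\xi=\bar{\rm S}_{\,\rm mix}-\tfrac{p-1}{p}\,g(\Hb,\Hb)-\tfrac{n-1}{n}\,g(\Ht,\Ht),\qquad \xi=\Hb+\Ht .
\]

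Next I would run the maximum-principle argument. Because $g>0$ and $\bar{\rm S}_{\,\rm mix}\le0$, every summand on the right is $\le0$, so $\Div(-\xi)\ge0$; applying Lemma~\ref{L-Div-1} to $-\xi$ (or, in the closed case, using \eqref{E-Div-Th} directly) together with $\|\xi\|_g\in L^1(M,g)$ gives $\Div\xi\equiv0$. Since the three nonpositive terms add to zero, each vanishes: $\bar{\rm S}_{\,\rm mix}=0$, $\Ht=0$, and $\Hb=0$. By the mean-curvature formulas for $B\times_{(v,u)}F$ this reads $\nabla^\top(\log u)=0$ and $\nabla^\bot(\log v)=0$, i.e. $u$ is constant along the fibres and $v$ along the leaves; equivalently, umbilicity now forces $h^\top=0=h^\bot$, so both foliations are totally geodesic.

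Finally I would upgrade this one-directional constancy to full constancy. Feeding $h^\top=h^\bot=0$ and $T^\top=T^\bot=0$ into \eqref{E-PW} yields $\Div(\Ht+\Hb)={\rm S}_{\,\rm mix}$, whence ${\rm S}_{\,\rm mix}=0$. On the other hand, for a doubly-twisted product the Riemannian mixed scalar curvature admits an independent expression in terms of the warping functions (the Hessians of $\log u$ and $\log v$ in the complementary directions), as in \cite{pr,step2}; equating the two, ${\rm S}_{\,\rm mix}=0$ together with the already-established constancy along one factor forces $u$ and $v$ to be constant on the connected factors. Then $g=c_2\,g_B\oplus c_1\,g_F$ is a product metric on the topological product $B\times F$, so $(M,g)$ is the direct product $(B,c_2 g_B)\times(F,c_1 g_F)$, as the analogous Corollary of Theorem~\ref{T-w2} concludes via de Rham. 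The main obstacle is precisely this last step: one must reconcile the two derivations of ${\rm S}_{\,\rm mix}=0$ so that the curvature identity, not merely the vanishing of $\Ht$ and $\Hb$, is what pins the warping functions to constants; one should also be careful in low rank ($n=1$ or $p=1$), where $\tfrac{n-1}{n}$ or $\tfrac{p-1}{p}$ degenerates and the vanishing of $\Ht,\Hb$ is not delivered by the divergence identity alone.
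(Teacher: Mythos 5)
Your proposal reproduces the paper's proof essentially step for step: set ${\cal D}^\top=\pi_*(TF)$, reduce to the identity $\Div\xi=\bar{\rm S}_{\,\rm mix}-\frac{p-1}{p}\,g(\Hb,\Hb)-\frac{n-1}{n}\,g(\Ht,\Ht)$ of \eqref{E-umb-C5}, apply Lemma~\ref{L-Div-1} (or \eqref{E-Div-Th}) to force each nonpositive summand to vanish, deduce $\nabla^\top u=0=\nabla^\bot v$, and pass through ${\rm S}_{\,\rm mix}=0$ and de Rham to conclude. The ``main obstacle'' you flag at the end is dispatched by the paper in a single clause (``By the above, ${\rm S}_{\,\rm mix}=0$; thus, $u$ and $v$ are constant''), so your caution there --- and about the degenerate cases $n=1$ or $p=1$, where the coefficient $\frac{n-1}{n}$ or $\frac{p-1}{p}$ vanishes and the divergence identity alone does not kill $\Ht$ or $\Hb$ --- points at thin spots in the paper's own argument rather than at a defect peculiar to yours.
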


\begin{proof} Set ${\cal D}^\top=\pi_*(TF)$. Under conditions, we get, see \eqref{E-umb-C5},
\begin{equation*}
 \Div \xi = \bar{\rm S}_{\,\rm mix} -\frac{p-1}{p}\,g(\Hb,\Hb) -\frac{n-1}{n}\,g(\Ht,\Ht) .
\end{equation*}
Applying Lemma~\ref{L-Div-1} to $M$, and since $\bar{\rm S}_{\,\rm mix}\le0$,
we get $\Div\xi=0$. Hence,  $\bar{\rm S}_{\,\rm mix}=0$ and $\Ht=0=\Hb$, i.e., $\nabla^\top u=0=\nabla^\bot v$.
By the above, ${\rm S}_{\,\rm mix}=0$; thus, $u$ and $v$ are constant.
By de Rham decomposition theorem, $(M,g)$ splits with the factors $(B,\,c_1\cdot g_B)$ and $(F,\,c_2\cdot g_F)$ for some positive $c_1,c_2\in\RR$.
\end{proof}

\textbf{Acknowledgments}.
The author wishes to thank
Prof. Sergey Stepanov (Moscow) for useful conversations during the XIX Geometrical Seminar, Zlatibor, 2016,
and Prof.~Pawel Walczak (Lodz) for comments concerning this work.

\baselineskip=12.7pt

\end{document}